\theoremstyle{definition}
\newtheorem{definition}{Definition}
\newtheorem{theorem}[definition]{Theorem}
\newtheorem{proposition}[definition]{Proposition}
\newtheorem{lemma}[definition]{Lemma}
\theoremstyle{remark}
\newtheorem{remark}[definition]{Remark}
\newtheorem{example}[definition]{Example}
\newcounter{enumctr}
\newcommand{\R}{\mathbb{R}}
\newcommand{\C}{\mathbb{C}}
\newcommand{\id}{\hbox{id}}
\newcommand{\rT}{\mathrm {T}}
\begin{document}
\title{\vspace*{-10mm}
A Perron-type theorem for fractional linear differential
systems}
\author{
N.D.~Cong\footnote{\tt ndcong@math.ac.vn, \rm Institute of Mathematics, Vietnam Academy of Science and Technology, 18 Hoang Quoc Viet, 10307 Ha Noi, Viet Nam},
T.S.~Doan\footnote{\tt dtson@math.ac.vn, \rm Institute of Mathematics, Vietnam Academy of Science and Technology, 18 Hoang Quoc Viet, 10307 Ha Noi, Viet Nam and Department of Mathematics, Hokkaido University, Japan}
\;and\;
H.T.~Tuan\footnote{\tt httuan@math.ac.vn, \rm Institute of Mathematics, Vietnam Academy of Science and Technology, 18 Hoang Quoc Viet, 10307 Ha Noi, Viet Nam.}
}
\maketitle

\begin{abstract}
We give a necessary and sufficient condition for a system of linear inhomogeneous fractional differential equations to have at least one bounded solution. We also obtain an explicit description for the set of all bounded (or decay) solutions for these systems.
\end{abstract}

{\em Keywords: Fractional differential equations; Linear systems; Bounded solutions; Perron-type theorem; Asymptotic behavior.}

{\it 2010 Mathematics Subject Classification:} {\small 26A33, 34A08, 34A30, 34E10.}
\section{Introduction}
In recent years, fractional differential equations (FDEs) have attracted increasing interest due to their varied applications on various fields of science and engineering. Their applications ranging from physics (see, e.g.,  Hilfer~\cite{Hilfer}), image processing (see Bai and Feng~\cite{Bai}), biomechanics (see Freed and Diethelm~\cite{Freed}) to finance  (see Scalas~\cite{Scalas}), and to social sciences  (see Ahmed and El-Khazali~\cite{Ahmed}). For more details on theory of fractional differential equations and its applications, we refer the reader to the monographs  Samko \textit{et al.}~\cite{Samko} and  Diethelm~\cite{Kai} and the references therein. Like the classical theory of ordinary differential equations, the investigation of long term behavior of solutions of fractional differential equations is of  fundamental importance for the theory of fractional differential equations.
Although several results on asymptotic behavior of fractional differential equations are already published (e.g., on stability theory \cite{Ahmed_1, Tisdell2012, Li_Chen_Podlubny2010, Cong_3}, boundedness of solutions \cite{Gallegos}, Lyapunov exponents \cite{Cong_1}, attractivity \cite{Chen_2012}, stable manifolds \cite{Cong_2},\dots), it is surprising to see that much of the basic qualitative foundational theory is yet to be fully developed. One of the reasons for this is the fact that the solution to a fractional differential equation does not generate a semi-group and thus do not generate flow in the classical sense. 

Consider the inhomogenneous system of the order $\alpha \in (0,1)$ involving {\em Caputo derivative}
\begin{equation}\label{mainEq}
^{C\!}D^\alpha_{0+}x(t)=Ax(t)+f(t),
\end{equation}
where $t\in [0,\infty)$, $x(t)\in\R^d$, $A\in\R^{d\times d}$ and $f:[0,\infty)\rightarrow \R^d$. Motivated by Perron's work \cite{Perron}, an interesting question arises here: what conditions must $A$ satisfy in order that \eqref{mainEq} has at least one bounded solution for every continuous vector-valued functions $f(\cdot)$. 
In the case of ordinary differential equations, i.e. in case of \eqref{mainEq} with $\alpha=1$, the answer is known:  $0$ does not belong to the spectrum $\sigma(A)$ of $A$ (see Coppel~\cite[Proposition 3, p. 22]{Coppel}). However, for the fractional case $0<\alpha<1$, the question is still open.

In 1996, Matignon~\cite{Matignon} studied the fractional system \eqref{mainEq}. By using the Laplace transform and the corresponding characteristic equation, he gave a criterion for the external stability of 
\eqref{mainEq}, see \cite[Theorem 4, p. 967]{Matignon}. Since then, many authors have investigated and derived results on stability and convergence of solutions of linear fractional differential systems, see e.g., Deng~\textit{et el.}~\cite{Deng_2007}, Sabatier~\textit{et al.}~\cite{Sabatier}, Mesbahi and Haeri~\cite{Mesbahi}, Abusaksaka and Partington~\cite{Abusaksaka}, and Duarte-Mermoud~\cite{Duarte}. 

In this paper, we consider the fractional system \eqref{mainEq} of the fractional order $\alpha \in (0,1)$ with the external force $f(\cdot)$ in the space of bounded continuous vector-valued functions. We will give a necessary and sufficient condition for this system to have at least one bounded solution. Our main result is
a Perron-type theorem for FDEs (Theorem~\ref{thm.main}) saying that the inhomogeneous system \eqref{mainEq} has at least one bounded solution for every bounded continuous external force $f(\cdot)$ if and only if the matrix $A$ satisfies a hyperbolic condition 
\begin{equation}\label{SpecCond}
\sigma(A) \subset \Lambda_{\alpha}^u \cup \Lambda_{\alpha}^s,
\end{equation}
where $\Lambda_{\alpha}^u, \Lambda_{\alpha}^s$ are defined by \eqref{SectorUnstable}--\eqref{SectorStable}.

Furthermore, we also obtain an explicit description for the set of all bounded solutions of \eqref{mainEq}, and in the case $\lim_{t\to\infty} f(t)=0$ for the set of solutions decaying to $0$. To do this, our approach is as follows. First, we transform the matrix $A$ into its Jordan normal form to obtain a simpler system than the system \eqref{mainEq}. Next, using the variation of constants formula and a procedure of substitution to describe solutions explicitly. Finally, by estimating Mittag-Leffler functions in domains of the complex plane we show asymptotic behavior of solutions which enable us to describe the set of bounded solutions of \eqref{mainEq}. 

The paper is organized as follows. In Section~\ref{sec.preliminaries}, we present some  basics of fractional calculus and some preliminary results related to Mittag-Leffler functions. In Section~\ref{sec.main} we  describe the set of bounded and decaying solutions of \eqref{mainEq} (Theorem \ref{Main Result1}) in the case the matrix $A$ satisfying the hyperbolicity condition \eqref{SpecCond}. Furthermore, by showing that the hyperbolicity condition \eqref{SpecCond} is necessary for \eqref{mainEq} to have at least one bounded solution for any give external force we derive the main result of the paper, Theorem~\ref{thm.main}. 

\section{Preliminaries}\label{sec.preliminaries}
In this section we briefly recall some basics of fractional calculus.
First we introduce notations which are used throughout this paper. 
For a nonzero complex number $\lambda$, we define its argument to be in the interval $-\pi < \arg{(\lambda)}\leq \pi$. For $\alpha \in (0,1)$, we define the sets
\begin{align}
\Lambda_{\alpha}^u &:=\left\{\lambda \in\C\setminus\{0\}:|\arg{(\lambda)}|< \frac{\alpha \pi}{2}\right\},\label{SectorUnstable}\\
\Lambda_{\alpha}^s &:=\left\{\lambda\in\C\setminus\{0\}:|\arg{(\lambda)}|> \frac{\alpha \pi}{2}\right\}. \label{SectorStable}
\end{align}

Let $\R_{\geq 0}$ denote the set of all nonnegative real numbers. For a Banach space $(X,\|\cdot\|)$, we denote by $\left(C_\infty(\R_{\geq 0};X),\|\cdot\|_\infty\right)$ the space of all continuous functions $\xi:\R_{\geq 0}\rightarrow X$ such that
\[
\|\xi\|_\infty:=\sup_{t\in \R_{\geq 0}}\|\xi(t)\|<\infty,
\]
and by $\left(C_\infty^0(\R_{\geq 0};X),\|\cdot\|_\infty\right)$  the space of functions $\xi \in C_\infty(\R_{\geq 0};X)$ satisfying
\[
\lim_{t\to \infty}\|\xi(t)\|=0.
\]
Clearly, $C_\infty^0(\R_{\geq 0};X) \subset C_\infty(\R_{\geq 0};X)$ and both of them are Banach spaces with the norm $\|\cdot\|_\infty$.

Let $\alpha>0$, $[a,b]\subset \R$ and $x:[a,b]\rightarrow \R$ be a measurable function such that $\int_a^b|x(\tau)|\;d\tau<\infty$. Then, the Riemann--Liouville integral operator of order $\alpha$ is defined by
\[
(I_{a+}^{\alpha}x)(t):=\frac{1}{\Gamma(\alpha)}\int_a^t(t-\tau)^{\alpha-1}x(\tau)\;d\tau\quad \hbox{ for } t>a,
\]
where the Gamma function  $\Gamma:(0,\infty)\rightarrow \R$ is defined as
\[
\Gamma(\alpha):=\int_0^\infty \tau^{\alpha-1}\exp(-\tau)\;d\tau.
\]
 The corresponding Riemann-Liouville fractional derivative is given by
\[
(D_{a+}^\alpha) x(t):=(D^mI_{a+}^{m-\alpha}x)(t),
\]
where $D=\frac{d}{dx}$ is the usual derivative and $m:=\lceil\alpha\rceil$ is the smallest integer larger or equal $\alpha$. The \emph{Caputo fractional derivative} $^{C\!}D_{a+}^\alpha x$ of a function $x\in C^m([a,b])$ (see e.g.,\ \cite{Kai}), is defined by
\[
(^{C\!}D_{a+}^\alpha x)(t):=(I_{a+}^{m-\alpha}D^mx)(t),\qquad \hbox{ for } t>a.
\]
The Caputo fractional derivative of a $d$-dimensional vector function $x(t)=(x_1(t),\cdots,x_d(t))^{\rT}$ is defined component-wise as
\[
(^{C\!}D_{a+}^\alpha x)(t):=(^{C\!}D_{a+}^\alpha x_1(t),\cdots,^{C\!}D_{a+}^\alpha x_d(t))^{\rT}.
\]
Throughout this paper, we only consider $\alpha \in (0,1)$.

Let us look at our equation \eqref{mainEq}: if $f(\cdot)$ vanishes then \eqref{mainEq} can be solved explicitly with the help of the Mittag-Leffler functions, which play a fundamental role in investigation of fractional differential equations like the exponential functions do for the ordinary differential equations. The {\em Mittag-Leffler function} is defined for $z\in\C$ as
\[
E_{\alpha,\beta}(z)=\sum_{k=0}^\infty\frac{z^k}{\Gamma(\alpha k+\beta)},\qquad E_\alpha(z):=E_{\alpha,1}(z).
\]
We may substitute $z$ by $A\in\R^{d\times d}$ to get Mittag-Leffler function of matrix variable. In case $f(\cdot)$ vanishes the general solution of \eqref{mainEq} is $E_\alpha(t^\alpha A) x_0$ with $x_0\in\R^d$.

Now, in the case $f(\cdot)$ does not vanish, then, in general, \eqref{mainEq} cannot be solved explicitly. Fortunately, like the ordinary differential equations we may use the so called variations of constants formula to investigate the solutions of the inhomogeneous equation \eqref{mainEq}, namely consider the initial value problem associated with \eqref{mainEq}:
\begin{equation}\label{mainEq1}
\begin{split}
^{C\!}D^\alpha_{0+}x(t)&=Ax(t)+f(t),\\
x(0)&=x_0\in\R^d.
\end{split}
\end{equation}
By using the Laplace transform, we get a representation for solutions of this system as follows.
\begin{theorem}[Variation of constants formula for fractional differential equations]\label{Var_Const_Form}
For any $x_0 \in \R$ the system \eqref{mainEq1} has a unique solution, which we denote 
by $\varphi(\cdot;0,x_0)$. Moreover, the solution $\varphi(\cdot;0,x_0)$ is given by the formula
\[
\varphi(t;0,x_0)=E_\alpha(t^\alpha A)\;x_0+\int_0^t (t-\tau)^{\alpha-1}E_{\alpha,\alpha}((t-\tau)^\alpha A)f(\tau)\;d\tau,
\] 
for every $t\geq 0$.
\end{theorem}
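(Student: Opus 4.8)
The plan is to establish existence and uniqueness separately, both via the Laplace transform as suggested in the paragraph preceding the statement. First I would apply the Laplace transform formally to the initial value problem \eqref{mainEq1}. Using the standard formula for the Laplace transform of the Caputo derivative, namely $\mathcal{L}\{^{C\!}D^\alpha_{0+}x\}(s) = s^\alpha X(s) - s^{\alpha-1}x_0$ where $X(s) = \mathcal{L}\{x\}(s)$, the equation becomes $s^\alpha X(s) - s^{\alpha-1}x_0 = AX(s) + F(s)$, hence $(s^\alpha I - A)X(s) = s^{\alpha-1}x_0 + F(s)$. For $s$ with sufficiently large real part, $s^\alpha$ lies outside $\sigma(A)$, so $s^\alpha I - A$ is invertible and
\[
X(s) = (s^\alpha I - A)^{-1}s^{\alpha-1}x_0 + (s^\alpha I - A)^{-1}F(s).
\]

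Next I would identify each term on the right as a known Laplace transform. The key ingredient is the identity $\mathcal{L}\{t^{\beta-1}E_{\alpha,\beta}(t^\alpha A)\}(s) = s^{\alpha-\beta}(s^\alpha I - A)^{-1}$, valid for $\mathrm{Re}(s)$ large, which follows by expanding $E_{\alpha,\beta}$ as a power series, transforming term by term using $\mathcal{L}\{t^{\alpha k + \beta - 1}\}(s) = \Gamma(\alpha k + \beta)/s^{\alpha k + \beta}$, and resumming the resulting Neumann series $\sum_k A^k/s^{\alpha(k+1)} = s^{-\alpha}(I - s^{-\alpha}A)^{-1}$. Taking $\beta = 1$ gives that the first term is the transform of $E_\alpha(t^\alpha A)x_0$; taking $\beta = \alpha$ gives that $(s^\alpha I - A)^{-1}$ is the transform of $t^{\alpha-1}E_{\alpha,\alpha}(t^\alpha A)$, so by the convolution theorem the second term is the transform of $\int_0^t (t-\tau)^{\alpha-1}E_{\alpha,\alpha}((t-\tau)^\alpha A)f(\tau)\,d\tau$. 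Inverting the Laplace transform then yields the claimed formula for $\varphi(\cdot;0,x_0)$.

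For rigor, rather than relying on the formal inversion I would instead verify directly that the function
\[
\varphi(t) := E_\alpha(t^\alpha A)x_0 + \int_0^t (t-\tau)^{\alpha-1}E_{\alpha,\alpha}((t-\tau)^\alpha A)f(\tau)\,d\tau
\]
solves \eqref{mainEq1}: one checks $\varphi(0) = x_0$ since $E_\alpha(0) = I$ and the integral vanishes at $t=0$, and one applies $^{C\!}D^\alpha_{0+}$ to $\varphi$ using the term-by-term differentiation rule $^{C\!}D^\alpha_{0+}(t^{\beta-1}E_{\alpha,\beta}(t^\alpha A)) = t^{\beta-\alpha-1}E_{\alpha,\beta-\alpha}(t^\alpha A)$ for the first term and Leibniz-type differentiation of the Riemann–Liouville convolution for the second, recovering $A\varphi(t) + f(t)$. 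Uniqueness follows because the difference of two solutions solves the homogeneous equation with zero initial data; writing it as the fixed point of the Volterra operator $x \mapsto I^\alpha_{0+}(Ax)$ (equivalently, iterating the integral equation $x(t) = \frac{1}{\Gamma(\alpha)}\int_0^t (t-\tau)^{\alpha-1}Ax(\tau)\,d\tau$) and using a Gronwall-type estimate for fractional integrals forces it to vanish on every compact interval, hence on $[0,\infty)$.

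The main obstacle is making the Laplace-transform manipulations rigorous: one must justify that the relevant transforms converge (which holds for $\mathrm{Re}(s)$ larger than some abscissa depending on $\|A\|$ and on the growth of $f$), that term-by-term transformation of the Mittag-Leffler series is legitimate, and above all that the inverse Laplace transform is well-defined and unique on the class of functions under consideration — $f$ here is merely continuous, not a priori of exponential order, so the cleanest route is to do the computation on each finite interval $[0,T]$ (truncating $f$), or simply to bypass inversion entirely in favor of the direct verification sketched above, treating the Laplace computation only as the heuristic that produces the candidate formula. The remaining estimates — absolute and uniform convergence of the Mittag-Leffler matrix series, and the interchange of $^{C\!}D^\alpha_{0+}$ with the series and the convolution integral — are routine given local boundedness of $f$.
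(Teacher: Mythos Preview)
Your approach is correct and matches the method the paper itself invokes: the paragraph preceding the theorem announces that the representation is obtained via the Laplace transform, and the paper's own proof consists solely of the citations ``See \cite[Theorem~2]{Baleanu} and \cite{Kexue}'', the latter being precisely a Laplace-transform derivation. Your sketch is thus considerably more detailed than what the paper supplies, and your fallback of direct verification together with a Volterra--Gronwall uniqueness argument is a sound way to address the rigor issues you correctly flag regarding the growth of $f$.
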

\begin{proof}
See \cite[Theorem 2]{Baleanu} and \cite{Kexue}.
\end{proof}

Now, we introduce basic properties of Mittag-Leffler function. These results are light refinements and adaption of the known results in the theory of Mittag-Leffler function to our case. To derive the estimations one uses the integral representation of Mittag-Leffler function, see e.g., Podlubny~\cite{Podlubny}. To save the length of the paper we do not give a full proof of the theorem, but give only sketch of the proof.
\begin{lemma}\label{lemma3}
Let $\lambda$ be an arbitrary complex number. There exist a positive real number  $m(\alpha,\lambda)$ such that for every $t\geq 1$ the following estimations hold:
\begin{itemize}
\item [(i)] if $\lambda\in\Lambda_\alpha^u$ then 
\begin{align*} 
\left|E_\alpha(\lambda t^{\alpha})-\frac{1}{\alpha}\exp{(\lambda^{\frac{1}{\alpha}}t)}\right|&\le \frac{m(\alpha,\lambda)}{t^{\alpha}},\\
\left|t^{\alpha-1}E_{\alpha,\alpha}(\lambda t^{\alpha})-\frac{1}{\alpha}\lambda^{\frac{1}{\alpha}-1
}\exp{(\lambda^{\frac{1}{\alpha}}t)}\right|&\le \frac{m(\alpha,\lambda)}{t^{\alpha+1}};
\end{align*}
\item [(ii)] if $\lambda\in\Lambda_\alpha^s$ then 
$$\left|t^{\alpha-1}E_{\alpha,\alpha}(\lambda t^{\alpha})\right|\le \frac{m(\alpha,\lambda)}{t^{\alpha+1}}.$$
\end{itemize}
\end{lemma}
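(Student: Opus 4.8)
The plan is to derive all three estimates from the classical two-sector asymptotic expansion of the Mittag--Leffler function, specialized to the ray $z=\lambda t^{\alpha}$, $t>0$. Starting from the Hankel-type integral representation
\[
E_{\alpha,\beta}(z)=\frac{1}{2\pi\alpha i}\int_{\gamma}\frac{e^{\zeta^{1/\alpha}}\,\zeta^{(1-\beta)/\alpha}}{\zeta-z}\,d\zeta
\]
(plus, for $z$ with $|\arg z|<\mu$, the residue term $\tfrac1\alpha z^{(1-\beta)/\alpha}e^{z^{1/\alpha}}$), see Podlubny~\cite{Podlubny}, one deforms $\gamma$, inserts the finite geometric expansion $\frac{1}{\zeta-z}=-\sum_{k=1}^{N}\zeta^{k-1}z^{-k}-\zeta^{N}z^{-N}(\zeta-z)^{-1}$, and uses the Hankel formula for $1/\Gamma$ to obtain, for every fixed $\mu$ with $\tfrac{\alpha\pi}{2}<\mu<\min\{\pi,\alpha\pi\}$ and every positive integer $N$,
\begin{align*}
E_{\alpha,\beta}(z)&=\frac{1}{\alpha}z^{\frac{1-\beta}{\alpha}}\exp\big(z^{\frac1\alpha}\big)-\sum_{k=1}^{N}\frac{z^{-k}}{\Gamma(\beta-\alpha k)}+O\big(|z|^{-N-1}\big)\quad\text{if }|\arg z|\le\mu,\\
E_{\alpha,\beta}(z)&=-\sum_{k=1}^{N}\frac{z^{-k}}{\Gamma(\beta-\alpha k)}+O\big(|z|^{-N-1}\big)\quad\text{if }\mu\le|\arg z|\le\pi,
\end{align*}
as $|z|\to\infty$; here the powers use the principal branch, so that (since $t^{\alpha}>0$) $(\lambda t^{\alpha})^{1/\alpha}=\lambda^{1/\alpha}t$ and $(\lambda t^{\alpha})^{(1-\beta)/\alpha}=\lambda^{\frac{1-\beta}{\alpha}}t^{1-\beta}$. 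I would then apply this with $N=1$ throughout.

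For item (i): since $\lambda\in\Lambda_\alpha^u$ we have $|\arg(\lambda t^{\alpha})|=|\arg\lambda|<\tfrac{\alpha\pi}{2}$ for all $t>0$, so fixing any $\mu\in(|\arg\lambda|,\min\{\pi,\alpha\pi\})$ (nonempty because $|\arg\lambda|<\tfrac{\alpha\pi}{2}<\alpha\pi$) the first expansion applies along the whole ray. Taking $\beta=1$ gives leading term $\tfrac1\alpha\exp(\lambda^{1/\alpha}t)$ and remainder $O(|z|^{-1})=O(t^{-\alpha})$, which is the first inequality. Taking $\beta=\alpha$ gives leading term $\tfrac1\alpha\lambda^{\frac1\alpha-1}t^{1-\alpha}\exp(\lambda^{1/\alpha}t)$; the $k=1$ correction has coefficient $1/\Gamma(\alpha-\alpha)=1/\Gamma(0)=0$, so the remainder is in fact $O(|z|^{-2})=O(t^{-2\alpha})$; multiplying through by $t^{\alpha-1}$ gives the second inequality with right-hand side of order $t^{-\alpha-1}$.

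For item (ii): since $\lambda\in\Lambda_\alpha^s$ we have $\tfrac{\alpha\pi}{2}<|\arg(\lambda t^{\alpha})|=|\arg\lambda|\le\pi$ for all $t>0$. Choose $\mu\in\big(\tfrac{\alpha\pi}{2},\min\{|\arg\lambda|,\alpha\pi\}\big)$, which is nonempty because both $|\arg\lambda|$ and $\alpha\pi$ strictly exceed $\tfrac{\alpha\pi}{2}$; then $\mu\le|\arg z|\le\pi$ along the ray and the second expansion applies. With $\beta=\alpha$ and $N=1$ the sum vanishes (again the $1/\Gamma(0)$ coefficient), so $E_{\alpha,\alpha}(\lambda t^{\alpha})=O(|z|^{-2})=O(t^{-2\alpha})$, and multiplying by $t^{\alpha-1}$ gives $|t^{\alpha-1}E_{\alpha,\alpha}(\lambda t^{\alpha})|=O(t^{-\alpha-1})$.

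The only genuine work beyond invoking the cited expansion is upgrading each ``$O(\cdot)$ as $|z|\to\infty$'' to the stated bound ``$\le m(\alpha,\lambda)/t^{\,\cdot}$ for all $t\ge1$''. For fixed $\lambda$ (note $\lambda\neq0$ in both cases) the asymptotics furnish $R,C>0$ giving the pointwise bound whenever $|\lambda|t^{\alpha}\ge R$, i.e. $t\ge(R/|\lambda|)^{1/\alpha}$; on the remaining compact range $t\in[1,(R/|\lambda|)^{1/\alpha}]$ (when nonempty) the relevant left-hand difference is continuous hence bounded, while $t^{-\alpha}$ and $t^{-\alpha-1}$ are bounded below there, so all three estimates fold into a single constant $m(\alpha,\lambda)$ (their maximum). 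Keeping the choices of $\mu$ and the principal-branch identities mutually consistent is the main place where care is needed, but I do not expect any substantive obstacle.
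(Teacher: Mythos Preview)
Your proposal is correct and follows exactly the approach the paper indicates: the paper gives no detailed argument but simply says to use the integral representation of the Mittag--Leffler function and the method of estimation of Podlubny's Theorems~1.3 and~1.4, which are precisely the two-sector asymptotic expansions you invoke. Your added observation that the $k=1$ term vanishes when $\beta=\alpha$ (because $1/\Gamma(0)=0$), together with the compactness argument on $[1,(R/|\lambda|)^{1/\alpha}]$ to upgrade the asymptotic $O$-bounds to uniform constants for all $t\ge1$, fills in exactly the details the paper omits.
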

For a proof of this theorem one uses integral representations of Mittag-Leffler functions and the method of estimation of the integrals similar to that of the proofs of Theorem 1.3 and  Theorem 1.4 in the book by Podlubny~\cite[pp. 32--34]{Podlubny}.
\begin{lemma}\label{lemma4} Let $\lambda\in\C\setminus\{0\}$. There exists a positive constant $K(\alpha,\lambda)$ such that for all $t \ge 0$ the following estimates hold: 
\begin{itemize}
\item [(i)] if $\lambda\in\Lambda_\alpha^u$ then
\begin{align*}
&\int_t^\infty \left|\lambda^{\frac{1}{\alpha}-1}E_\alpha(\lambda t^\alpha)
\exp(-\lambda^{\frac{1}{\alpha}}\tau)\right|\;d\tau
\leq K(\alpha,\lambda),\\
&\int_0^t\left|
\left((t-\tau)^{\alpha-1}E_{\alpha,\alpha}(\lambda(t-\tau)^{\alpha})- \lambda^{\frac{1}{\alpha}-1}E_\alpha(\lambda t^\alpha)\exp(-\lambda^{\frac{1}{\alpha}}\tau)\right)\right|\;d\tau\\
&\hspace{6.5cm} \leq K(\alpha,\lambda);
\end{align*}
\item [(ii)] if $\lambda\in\Lambda_\alpha^s$ then
\[
\int_0^t\left|(t-\tau)^{\alpha-1}E_{\alpha,\alpha}(\lambda(t-\tau)^{\alpha})\right|\;d\tau\leq K(\alpha,\lambda).
\]
\end{itemize}
\end{lemma}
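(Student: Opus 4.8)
The plan is to deduce all three integral estimates directly from the pointwise asymptotics in Lemma~\ref{lemma3}, so that the only real work is controlling the integrands near $t=0$ (where Lemma~\ref{lemma3} does not apply) and checking integrability of the tails. Throughout, fix $\lambda\in\C\setminus\{0\}$; all constants below are allowed to depend on $\alpha$ and $\lambda$.

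For part (ii), the integrand $(t-\tau)^{\alpha-1}|E_{\alpha,\alpha}(\lambda(t-\tau)^\alpha)|$ is, after the substitution $s=t-\tau$, simply $s^{\alpha-1}|E_{\alpha,\alpha}(\lambda s^\alpha)|$ integrated over $s\in[0,t]$. First I would split at $s=1$. On $[1,\infty)$ Lemma~\ref{lemma3}(ii) gives the bound $m(\alpha,\lambda)s^{-\alpha-1}$, which is integrable on $[1,\infty)$ with integral at most $m(\alpha,\lambda)/\alpha$. On $[0,1]$ the function $E_{\alpha,\alpha}(\lambda s^\alpha)$ is continuous, hence bounded by some $M$, and $\int_0^1 s^{\alpha-1}\,ds=1/\alpha$; so that piece contributes at most $M/\alpha$. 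Adding the two gives a $t$-independent bound $K(\alpha,\lambda)$, as required.

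For part (i), the second estimate is the substantive one. Write the integrand as
\[
g(t,\tau):=(t-\tau)^{\alpha-1}E_{\alpha,\alpha}(\lambda(t-\tau)^{\alpha})-\lambda^{\frac{1}{\alpha}-1}E_\alpha(\lambda t^\alpha)\exp(-\lambda^{\frac{1}{\alpha}}\tau),
\]
and use the triangle inequality after inserting the correct exponential term: since
$\frac{1}{\alpha}\lambda^{\frac{1}{\alpha}-1}\exp(\lambda^{1/\alpha}(t-\tau))$ is the leading term for $t^{\alpha-1}E_{\alpha,\alpha}$ at $t-\tau\ge1$, and $E_\alpha(\lambda t^\alpha)\approx\frac{1}{\alpha}\exp(\lambda^{1/\alpha}t)$ at $t\ge1$, both principal parts equal $\frac{1}{\alpha}\lambda^{\frac1\alpha-1}\exp(\lambda^{1/\alpha}t)\exp(-\lambda^{1/\alpha}\tau)$ and cancel. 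Concretely I would split $\int_0^t|g(t,\tau)|\,d\tau$ according to whether $t\le1$, or $t>1$ with $\tau\in[0,t-1]$, or $t>1$ with $\tau\in[t-1,t]$. On the "diagonal strip" $\tau\in[t-1,t]$ (or the whole interval when $t\le1$), the first term is handled by the $[0,1]$ continuity argument from part (ii) and the second term is bounded by $|E_\alpha(\lambda t^\alpha)|\cdot|\lambda^{1/\alpha-1}|\int_0^\infty|\exp(-\lambda^{1/\alpha}\tau)|\,d\tau$, which is finite because $\lambda\in\Lambda_\alpha^u$ forces $\operatorname{Re}(\lambda^{1/\alpha})>0$, while $|E_\alpha(\lambda t^\alpha)|$ is bounded on $[0,1]$ and, for $t>1$, comparable to $\exp(\operatorname{Re}(\lambda^{1/\alpha})t)/\alpha$ by Lemma~\ref{lemma3}(i) — so the product with $\exp(-\operatorname{Re}(\lambda^{1/\alpha})(t-1))$ from that strip stays bounded. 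On the bulk region $\tau\in[0,t-1]$ with $t>1$ we have $t-\tau\ge1$, so Lemma~\ref{lemma3}(i) applies to the first term and Lemma~\ref{lemma3}(i) (first inequality) to $E_\alpha(\lambda t^\alpha)$ in the second; after the principal parts cancel, the remainder is dominated by
\[
\frac{m(\alpha,\lambda)}{(t-\tau)^{\alpha+1}}+\frac{m(\alpha,\lambda)}{t^\alpha}\,|\lambda^{1/\alpha-1}|\,\bigl|\exp(-\lambda^{1/\alpha}\tau)\bigr|,
\]
and integrating in $\tau$ over $[0,t-1]$ gives $\le m(\alpha,\lambda)/\alpha$ from the first piece and, using $\int_0^\infty|\exp(-\lambda^{1/\alpha}\tau)|\,d\tau<\infty$ together with $t^{-\alpha}\le1$, a bounded contribution from the second. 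Collecting the pieces yields the uniform bound. The first estimate in (i) is the special case obtained by dropping the $E_{\alpha,\alpha}$ term: it is literally $|\lambda^{1/\alpha-1}|\,|E_\alpha(\lambda t^\alpha)|\int_t^\infty|\exp(-\lambda^{1/\alpha}\tau)|\,d\tau=|\lambda^{1/\alpha-1}|\,|E_\alpha(\lambda t^\alpha)|\cdot\frac{|\exp(-\lambda^{1/\alpha}t)|}{\operatorname{Re}(\lambda^{1/\alpha})}$, and by Lemma~\ref{lemma3}(i) the product $|E_\alpha(\lambda t^\alpha)|\cdot|\exp(-\lambda^{1/\alpha}t)|$ is bounded in $t\ge1$ (and trivially on $[0,1]$), so the whole expression is bounded by a constant $K(\alpha,\lambda)$.

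The main obstacle, and the only place requiring genuine care, is the diagonal strip $\tau\in[t-1,t]$ in part (i): there the leading exponential terms do \emph{not} have small difference pointwise, so one cannot appeal to cancellation, and one must instead use that the strip has width $1$ together with the positivity $\operatorname{Re}(\lambda^{1/\alpha})>0$ (valid precisely because $|\arg\lambda|<\alpha\pi/2$) to absorb the growth of $E_\alpha(\lambda t^\alpha)$ against the decay of $\exp(-\lambda^{1/\alpha}\tau)$. Everything else is a routine splitting-and-integrating exercise of the same type as the proof of part (ii).
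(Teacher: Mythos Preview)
Your argument is correct and follows exactly the route the paper indicates: the paper's own proof merely says the estimates follow from Lemma~\ref{lemma3} together with the splitting arguments of \cite[Lemma~5]{Cong_2}, and you have written those splitting arguments out in full. One sentence is muddled---when treating the strip $\tau\in[t-1,t]$ you first bound the second term by $|E_\alpha(\lambda t^\alpha)|\cdot|\lambda^{1/\alpha-1}|\int_0^\infty|\exp(-\lambda^{1/\alpha}\tau)|\,d\tau$, which is \emph{not} uniformly bounded in $t$---but you immediately follow this with the correct observation that on the strip the integral contributes only $\exp(-\operatorname{Re}(\lambda^{1/\alpha})(t-1))$, which does cancel the growth of $E_\alpha(\lambda t^\alpha)$; so the mathematics is fine, only the exposition there should be tightened.
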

\begin{proof}
The proof of this lemma follows easily by using Lemma \ref{lemma3} and repeating arguments used in the proof of Lemma 5 in \cite{Cong_2}.
\end{proof}

\begin{lemma}\label{LimitLemma}
For any function  $g\in C_\infty(\R_{\ge 0};\R)$ and $\lambda\in\Lambda_\alpha^u$, we have
\begin{align}\label{Eq4}
&\lim_{t\to\infty}\int_0^t
\notag (t-\tau)^{\alpha-1}\frac{E_{\alpha,\alpha}(\lambda(t-\tau)^\alpha)}{E_\alpha(\lambda t^\alpha)}g(\tau)\;d\tau\\
&\hspace*{5cm}=\lambda^{\frac{1}{\alpha}-1}\int_0^\infty\exp(-\lambda^{\frac{1}{\alpha}}\tau)g(\tau)\;d\tau.
\end{align}
\end{lemma}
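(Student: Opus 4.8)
The plan is to split the integral using the decomposition supplied by Lemma~\ref{lemma4}(i) and control each piece separately. Write
\[
(t-\tau)^{\alpha-1}E_{\alpha,\alpha}(\lambda(t-\tau)^\alpha)
= \lambda^{\frac{1}{\alpha}-1}E_\alpha(\lambda t^\alpha)\exp(-\lambda^{\frac{1}{\alpha}}\tau) + R(t,\tau),
\]
where $R(t,\tau):=(t-\tau)^{\alpha-1}E_{\alpha,\alpha}(\lambda(t-\tau)^\alpha)-\lambda^{\frac{1}{\alpha}-1}E_\alpha(\lambda t^\alpha)\exp(-\lambda^{\frac{1}{\alpha}}\tau)$. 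Dividing by $E_\alpha(\lambda t^\alpha)$ (which is nonzero for $t$ large, since by Lemma~\ref{lemma3}(i) it grows like $\frac{1}{\alpha}\exp(\lambda^{1/\alpha}t)$ and $\operatorname{Re}(\lambda^{1/\alpha})>0$ when $\lambda\in\Lambda_\alpha^u$), the integrand becomes
\[
\lambda^{\frac{1}{\alpha}-1}\exp(-\lambda^{\frac{1}{\alpha}}\tau)g(\tau) + \frac{R(t,\tau)}{E_\alpha(\lambda t^\alpha)}g(\tau).
\]
The first term, integrated over $[0,t]$, converges to $\lambda^{\frac{1}{\alpha}-1}\int_0^\infty\exp(-\lambda^{1/\alpha}\tau)g(\tau)\,d\tau$ as $t\to\infty$: the integral converges absolutely by the first estimate in Lemma~\ref{lemma4}(i) together with boundedness of $g$ (giving $\big|\exp(-\lambda^{1/\alpha}\tau)g(\tau)\big|\le \|g\|_\infty \big|\lambda^{1-1/\alpha}\big|\cdot\big|\lambda^{1/\alpha-1}E_\alpha(\lambda\cdot^\alpha)\exp(-\lambda^{1/\alpha}\tau)\big|/|E_\alpha(\lambda t^\alpha)|$, or more simply because $\operatorname{Re}(\lambda^{1/\alpha})>0$ forces exponential decay), so the tail $\int_t^\infty$ vanishes.

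It remains to show the error term $\int_0^t \frac{R(t,\tau)}{E_\alpha(\lambda t^\alpha)}g(\tau)\,d\tau\to 0$. Here I would bound it crudely by
\[
\frac{\|g\|_\infty}{|E_\alpha(\lambda t^\alpha)|}\int_0^t |R(t,\tau)|\,d\tau
\le \frac{\|g\|_\infty\, K(\alpha,\lambda)}{|E_\alpha(\lambda t^\alpha)|},
\]
using the second estimate of Lemma~\ref{lemma4}(i). Since $|E_\alpha(\lambda t^\alpha)|\to\infty$ as $t\to\infty$ (again by Lemma~\ref{lemma3}(i), because $\operatorname{Re}(\lambda^{1/\alpha})>0$), this quotient tends to $0$, which finishes the proof.

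The main obstacle, and the only nontrivial point, is justifying that $|E_\alpha(\lambda t^\alpha)|$ grows without bound and is bounded away from zero for large $t$, which is where the hypothesis $\lambda\in\Lambda_\alpha^u$ is essential: one uses $E_\alpha(\lambda t^\alpha)=\frac{1}{\alpha}\exp(\lambda^{1/\alpha}t)+O(t^{-\alpha})$ from Lemma~\ref{lemma3}(i) and the fact that $|\arg\lambda|<\frac{\alpha\pi}{2}$ translates into $|\arg(\lambda^{1/\alpha})|<\frac{\pi}{2}$, i.e.\ $\operatorname{Re}(\lambda^{1/\alpha})>0$, so $|\exp(\lambda^{1/\alpha}t)|=\exp(\operatorname{Re}(\lambda^{1/\alpha})\,t)\to\infty$. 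Everything else is a routine application of dominated convergence and the uniform bounds already recorded in Lemma~\ref{lemma4}.
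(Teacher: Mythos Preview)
Your proposal is correct and follows essentially the same approach the paper intends: the paper's proof is only a sketch (``use Lemma~\ref{lemma3}, Lemma~\ref{lemma4} and arguments analogous to those in \cite[Lemma~8]{Cong_2}''), and your argument---splitting off the principal term $\lambda^{\frac{1}{\alpha}-1}E_\alpha(\lambda t^\alpha)\exp(-\lambda^{1/\alpha}\tau)$, bounding the remainder $R(t,\tau)$ via the second estimate of Lemma~\ref{lemma4}(i), and killing the quotient by the exponential growth of $|E_\alpha(\lambda t^\alpha)|$ from Lemma~\ref{lemma3}(i)---is precisely how those lemmas are meant to be combined.
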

\begin{proof}
Use Lemma \ref{lemma3}, Lemma \ref{lemma4} and  arguments analogous to those used in the proof of Lemma 8 in \cite{Cong_2}. 
\end{proof}

\section{Bounded solutions of inhomogeneous linear fractional differential equations}\label{sec.main}

\subsection{The scalar (complex-valued) case}
In this subsection, we consider the inhomogeneous scalar equation
\begin{equation}\label{ScalarEq}
\begin{split}
^{C\!}D^{\alpha}_{0+}x(t)&=\lambda x(t)+f(t),\\
x(0)&=x_0,
\end{split}
\end{equation}
where $\lambda\in \C\setminus\{0\}$, $x_0\in \C$ and $f\in C_{\infty}(\R_{\ge 0};\R)$. 
\begin{proposition}\label{Stable}
If $\lambda\in \Lambda_\alpha^s$ and $f\in C_\infty(\R_{\geq 0};\R)$, then all solutions of \eqref{ScalarEq} are bounded on $\R_{\ge 0}$. 

If, additionally, $f\in C^0_\infty(\R_{\geq 0};\R)$, then all solutions of this equation tend to $0$ as $t \to \infty$.
\end{proposition}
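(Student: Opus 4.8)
The plan is to plug the variation of constants formula of Theorem~\ref{Var_Const_Form} into the estimates of Lemmas~\ref{lemma3} and~\ref{lemma4}. Every solution of \eqref{ScalarEq} equals
\[
\varphi(t;0,x_0)=E_\alpha(\lambda t^\alpha)\,x_0+\int_0^t(t-\tau)^{\alpha-1}E_{\alpha,\alpha}\bigl(\lambda(t-\tau)^\alpha\bigr)f(\tau)\,d\tau ,\qquad t\ge 0,
\]
so it is enough to handle the homogeneous term $E_\alpha(\lambda t^\alpha)x_0$ and the convolution term separately. For the homogeneous term I would use the classical fact that, for $\lambda\in\Lambda_\alpha^s$, one has $|E_\alpha(\lambda t^\alpha)|\le C(\alpha,\lambda)\,t^{-\alpha}$ for all large $t$ — this is exactly the sectorial Mittag-Leffler asymptotics in $\Lambda_\alpha^s$ (Podlubny~\cite{Podlubny}, or Matignon~\cite{Matignon}) used to prove Lemma~\ref{lemma3}; combined with continuity of $t\mapsto E_\alpha(\lambda t^\alpha)$ on $\R_{\ge 0}$ this gives that $E_\alpha(\lambda t^\alpha)x_0$ is bounded on $\R_{\ge 0}$ and tends to $0$ as $t\to\infty$, for every $x_0\in\C$.

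For the convolution term, boundedness follows at once from Lemma~\ref{lemma4}(ii): bounding $|f(\tau)|\le\|f\|_\infty$ and pulling the constant out, the modulus of the integral is at most $\|f\|_\infty\int_0^t\bigl|(t-\tau)^{\alpha-1}E_{\alpha,\alpha}(\lambda(t-\tau)^\alpha)\bigr|\,d\tau\le\|f\|_\infty\,K(\alpha,\lambda)$. Together with the bound on the homogeneous term this proves that every solution of \eqref{ScalarEq} is bounded on $\R_{\ge 0}$, which is the first assertion.

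For the decay statement, assume moreover $f\in C^0_\infty(\R_{\ge 0};\R)$, fix $\varepsilon>0$, pick $T>0$ with $|f(\tau)|<\varepsilon$ for $\tau\ge T$, and split the convolution integral as $\int_0^T+\int_T^t$. The tail is estimated exactly as before but with $\varepsilon$ replacing $\|f\|_\infty$, so by Lemma~\ref{lemma4}(ii) its contribution is $\le\varepsilon K(\alpha,\lambda)$. On the fixed interval $[0,T]$, as soon as $t\ge T+1$ we have $t-\tau\ge t-T\ge 1$ for all $\tau\in[0,T]$, so Lemma~\ref{lemma3}(ii) gives $\bigl|(t-\tau)^{\alpha-1}E_{\alpha,\alpha}(\lambda(t-\tau)^\alpha)\bigr|\le m(\alpha,\lambda)(t-T)^{-\alpha-1}$, whence the $\int_0^T$ piece is bounded by $\|f\|_\infty\,T\,m(\alpha,\lambda)\,(t-T)^{-\alpha-1}\to 0$ as $t\to\infty$. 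Thus $\limsup_{t\to\infty}$ of the modulus of the convolution term is at most $\varepsilon K(\alpha,\lambda)$; since $\varepsilon$ is arbitrary it tends to $0$, and together with $E_\alpha(\lambda t^\alpha)x_0\to 0$ we conclude $\varphi(t;0,x_0)\to 0$.

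I expect the only delicate point to be the homogeneous term: the lemmas in their stated form give decay and integrability only for $t^{\alpha-1}E_{\alpha,\alpha}(\lambda t^\alpha)$, not for $E_\alpha(\lambda t^\alpha)$ itself, so one must separately supply that $t\mapsto E_\alpha(\lambda t^\alpha)$ is bounded on $\R_{\ge 0}$ and vanishes at infinity on $\Lambda_\alpha^s$. This can be quoted from the standard Mittag-Leffler asymptotics, or at least its boundedness can be recovered inside the paper from Lemma~\ref{lemma3}(ii) through the identity $\tfrac{d}{dt}E_\alpha(\lambda t^\alpha)=\lambda t^{\alpha-1}E_{\alpha,\alpha}(\lambda t^\alpha)$, whose right-hand side is absolutely integrable on $[1,\infty)$ by that lemma. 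Everything else is routine absolute-value bookkeeping.
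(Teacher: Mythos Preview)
Your proof is correct and follows essentially the same route as the paper: variation of constants, boundedness of the convolution via Lemma~\ref{lemma4}(ii), and for decay a splitting of the integral at a time $T$ beyond which $|f|<\varepsilon$, with Lemma~\ref{lemma3}(ii) handling the compact piece $[0,T]$. The only tactical difference is that the paper splits the convolution into three pieces ($[0,T]$, $[T,t-1]$, $[t-1,t]$) and applies Lemma~\ref{lemma3}(ii) pointwise on the middle one, whereas you keep $[T,t]$ in one piece and invoke Lemma~\ref{lemma4}(ii) directly --- your version is slightly cleaner since Lemma~\ref{lemma4}(ii) already absorbs the near-diagonal singularity. Your explicit remark about the homogeneous term $E_\alpha(\lambda t^\alpha)$ not being literally covered by the stated lemmas is well taken; the paper handles this the same way you do, by appealing to the standard Mittag-Leffler asymptotics in $\Lambda_\alpha^s$.
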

\begin{proof}
In the case $f \in C_{\infty}(\R_{\geq 0};\R)$, using the variation of constant formula provided by Theorem~\ref{Var_Const_Form} we see that the first assertion of this proposition follows from Lemma~\ref{lemma4}(ii) and the fact that $E_\alpha(\lambda t^\alpha)$ is bounded on $\R_{\geq 0}$. 

We now consider the case $f\in C_\infty^0(\R_{\geq 0};\R)$.
Let $\varepsilon>0$ be arbitrary. We can find a constant $T>0$ such that $|f(t)|<\varepsilon$ for all $t\ge T$. For $t>T+1$ and $x_0\in \C$, using Theorem~\ref{Var_Const_Form} we have the following formula for the solution of \eqref{ScalarEq} starting from $x_0$
\begin{multline*}
\varphi(t;0,x_0)=x_0 E_\alpha (\lambda t^\alpha)+\int_0^t (t-\tau)^{\alpha-1} E_{\alpha,\alpha}(\lambda(t-\tau)^\alpha)f(\tau)\;d\tau\\
\;\;\;=x_0 E_\alpha (\lambda t^\alpha)+\int_0^{T} (t-\tau)^{\alpha-1} E_{\alpha,\alpha}(\lambda(t-\tau)^\alpha)f(\tau)\;d\tau\\
+\int_{T}^{t-1} (t-\tau)^{\alpha-1} E_{\alpha,\alpha}(\lambda(t-\tau)^\alpha)f(\tau)\;d\tau+\int_{t-1}^t (t-\tau)^{\alpha-1} E_{\alpha,\alpha}(\lambda(t-\tau)^\alpha)f(\tau)\;d\tau.
\end{multline*} 
By virtue of Lemma \ref{lemma3}(ii), we have
\begin{equation}\label{ScalarEq1+}
\lim_{t\to \infty}x_0 E_\alpha(\lambda t^\alpha)=0.
\end{equation}
On the other hand, by a simple computation, we obtain
\begin{align}\label{ScalarEq2+}
\notag &\left|\int_T^{t-1}(t-\tau)^{\alpha-1}E_{\alpha,\alpha}(\lambda (t-\tau)^\alpha)f(\tau)\,d\tau\right| \le \varepsilon\,\int_{1}^{t-T}|\tau^{\alpha-1}E_{\alpha,\alpha}(\lambda \tau^\alpha)|\;d\tau\\
& \hspace*{1.0cm}\le \frac{\varepsilon\, m(\alpha,\lambda)}{\alpha} \qquad\hbox{(due to Lemma~\ref{lemma3}(ii))}
\end{align}
and 
\begin{align}\label{ScalarEq3+}
\notag  \left| \int_{t-1}^t (t-\tau)^{\alpha-1}E_{\alpha,\alpha}(\lambda (t-\tau)^\alpha)f(\tau)\;d\tau\right| \le \varepsilon\,\int_0^1|\tau^{\alpha-1}E_{\alpha,\alpha}(\lambda \tau^\alpha)|\;d\tau \\
\hspace*{1.0cm} \le \varepsilon\,  E_{\alpha,\alpha+1}(| \lambda |) \qquad
\hbox{(see \cite[formula (1.99), p.~24]{Podlubny}).} 
\end{align}
Furthermore, 
\begin{align}\label{ScalarEq4+}
\notag \Big| \int_0^T (t-\tau)^{\alpha-1} & E_{\alpha,\alpha}(\lambda (t-\tau)^\alpha) f(\tau) d\tau \Big| \\
\notag\hspace{1.0cm}& \le \sup_{t\in \R_{\geq 0}}|f(t)|\int_{t-T}^t |\tau^{\alpha-1} E_{\alpha,\alpha}(\lambda \tau^\alpha)|\,d\tau\\
\hspace{1.0cm}& \le \frac{m(\alpha,\lambda)\sup_{t\in\R_{\geq 0}}|f(t)|}{\alpha (t-T)^\alpha}\qquad\hbox{(due to Lemma~\ref{lemma3}(ii))}.
\end{align}
Since $\epsilon$ is arbitrarily, from  \eqref{ScalarEq1+}, \eqref{ScalarEq2+}, \eqref{ScalarEq3+}, \eqref{ScalarEq4+} we get
\[
\lim_{t\to \infty}|\varphi(t;0,x_0)|=0,
\]
which completes the proof. 
\end{proof}

\begin{proposition}\label{Unstable}
If 
$\lambda \in \Lambda_\alpha^u$ 
and $f\in C_\infty(\R_{\geq 0};\R)$, then equation \eqref{ScalarEq} has a unique bounded solution which is determined by the following formula
\begin{align*}
\varphi(t;0,\overline{x}_0) = \; &E_\alpha(\lambda t^\alpha)\Big(-\lambda^{\frac{1}{\alpha}-1}\int_0^\infty \exp{(-\lambda^{\frac{1}{\alpha}}\tau)}f(\tau)\;d\tau \Big)\\
&\hspace*{2cm}+\int_0^t (t-\tau)^{\alpha-1}E_{\alpha,\alpha}(\lambda (t-\tau)^\alpha)f(\tau)\;d\tau,
\end{align*}
where 
$$
\overline{x}_0:=-\lambda^{\frac{1}{\alpha}-1}\int_0^\infty \exp{(-\lambda^{\frac{1}{\alpha}}\tau)}f(\tau)\;d\tau.
$$
If, additionally, $f\in C_\infty^0(\R_{\geq 0};\R)$, then this solution $\varphi(0,\overline{x}_0;t)$ tends to 0 as $t$ tends to $\infty$.
\end{proposition}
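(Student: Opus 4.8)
The plan is to use that $\lambda\in\Lambda_\alpha^u$ forces $|\arg(\lambda^{\frac{1}{\alpha}})|=\frac{1}{\alpha}|\arg(\lambda)|<\frac{\pi}{2}$, hence $\operatorname{Re}(\lambda^{\frac{1}{\alpha}})>0$; consequently Lemma~\ref{lemma3}(i) gives $E_\alpha(\lambda t^\alpha)=\frac{1}{\alpha}\exp(\lambda^{\frac{1}{\alpha}}t)+O(t^{-\alpha})$, so in particular $|E_\alpha(\lambda t^\alpha)|\to\infty$ as $t\to\infty$. Also, $\overline{x}_0$ is well defined because $|f(\tau)\exp(-\lambda^{\frac{1}{\alpha}}\tau)|\le\|f\|_\infty\exp(-\operatorname{Re}(\lambda^{\frac{1}{\alpha}})\tau)$ is integrable on $\R_{\ge 0}$. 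Every solution of \eqref{ScalarEq} equals $\varphi(\cdot;0,x_0)$ for some $x_0\in\C$ and is given by the variation of constants formula of Theorem~\ref{Var_Const_Form}.

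\emph{Boundedness of the candidate.} I would substitute $x_0=\overline{x}_0$ into Theorem~\ref{Var_Const_Form}, split $\int_0^\infty=\int_0^t+\int_t^\infty$ inside the definition of $\overline{x}_0$, and regroup the two $\int_0^t$-terms; this yields
\begin{align*}
\varphi(t;0,\overline{x}_0)&=\int_0^t\Bigl((t-\tau)^{\alpha-1}E_{\alpha,\alpha}(\lambda(t-\tau)^\alpha)-\lambda^{\frac{1}{\alpha}-1}E_\alpha(\lambda t^\alpha)\exp(-\lambda^{\frac{1}{\alpha}}\tau)\Bigr)f(\tau)\,d\tau\\
&\qquad{}-\lambda^{\frac{1}{\alpha}-1}E_\alpha(\lambda t^\alpha)\int_t^\infty\exp(-\lambda^{\frac{1}{\alpha}}\tau)f(\tau)\,d\tau.
\end{align*}
By the second estimate of Lemma~\ref{lemma4}(i), the first term is bounded in modulus by $K(\alpha,\lambda)\|f\|_\infty$; by the first estimate of Lemma~\ref{lemma4}(i), so is the second term. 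Hence $\sup_{t\ge 0}|\varphi(t;0,\overline{x}_0)|\le 2K(\alpha,\lambda)\|f\|_\infty<\infty$, and substituting $\overline{x}_0$ into Theorem~\ref{Var_Const_Form} gives exactly the displayed formula. For uniqueness, note that $\varphi(t;0,x_0)-\varphi(t;0,\overline{x}_0)=(x_0-\overline{x}_0)E_\alpha(\lambda t^\alpha)$; if $\varphi(\cdot;0,x_0)$ is bounded then so is $(x_0-\overline{x}_0)E_\alpha(\lambda t^\alpha)$, and since $|E_\alpha(\lambda t^\alpha)|\to\infty$ this forces $x_0=\overline{x}_0$. (One could equally divide by $E_\alpha(\lambda t^\alpha)$ and invoke Lemma~\ref{LimitLemma}, which shows $\varphi(t;0,x_0)/E_\alpha(\lambda t^\alpha)\to x_0-\overline{x}_0$; boundedness then forces this limit to vanish.)

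\emph{Decay when $f\in C_\infty^0(\R_{\ge 0};\R)$.} I would reuse the two-term decomposition above. Since $|f(t)|\to 0$, the second term satisfies $\bigl|\lambda^{\frac{1}{\alpha}-1}E_\alpha(\lambda t^\alpha)\int_t^\infty\exp(-\lambda^{\frac{1}{\alpha}}\tau)f(\tau)\,d\tau\bigr|\le K(\alpha,\lambda)\sup_{\tau\ge t}|f(\tau)|\to 0$ by Lemma~\ref{lemma4}(i). For the first term, given $\varepsilon>0$ pick $T$ with $|f(\tau)|<\varepsilon$ for $\tau\ge T$ and split $\int_0^t=\int_0^T+\int_T^t$: the $\int_T^t$ part is $\le\varepsilon K(\alpha,\lambda)$ by Lemma~\ref{lemma4}(i); for $\tau\in[0,T]$ and $t>T+1$ one has $t-\tau>1$, so Lemma~\ref{lemma3}(i) applied to $E_{\alpha,\alpha}(\lambda(t-\tau)^\alpha)$ and to $E_\alpha(\lambda t^\alpha)$, together with $|\exp(-\lambda^{\frac{1}{\alpha}}\tau)|\le 1$, bounds the integrand of the $\int_0^T$ part by $O((t-T)^{-\alpha-1})+O(t^{-\alpha})$ uniformly in $\tau$, whence that part is $O(T\|f\|_\infty\, t^{-\alpha})\to 0$. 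Letting $t\to\infty$ and then $\varepsilon\to 0$ gives $\varphi(t;0,\overline{x}_0)\to 0$.

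I expect the main obstacle to be the regrouping step in the boundedness part: one must recognize that the exponentially growing term $E_\alpha(\lambda t^\alpha)\overline{x}_0$ exactly cancels the principal part $\frac{1}{\alpha}\lambda^{\frac{1}{\alpha}-1}\exp(\lambda^{\frac{1}{\alpha}}t)\int_0^t\exp(-\lambda^{\frac{1}{\alpha}}\tau)f(\tau)\,d\tau$ of the variation-of-constants integral, so that after the splitting only quantities controlled by Lemma~\ref{lemma4}(i) survive. This is precisely the role of the choice of $\overline{x}_0$, and it is the only nontrivial point; the surviving estimates are routine adaptations of the computations in the proof of Proposition~\ref{Stable}.
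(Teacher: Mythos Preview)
Your proof is correct and follows essentially the same route as the paper: both use the identical regrouping of $\varphi(t;0,\overline{x}_0)$ into the tail term $-\lambda^{\frac{1}{\alpha}-1}E_\alpha(\lambda t^\alpha)\int_t^\infty\exp(-\lambda^{\frac{1}{\alpha}}\tau)f(\tau)\,d\tau$ and the $\int_0^t$ difference term, apply the two estimates of Lemma~\ref{lemma4}(i) for boundedness, then for decay split $\int_0^t=\int_0^T+\int_T^t$ and handle the $\int_0^T$ piece via Lemma~\ref{lemma3}(i); the uniqueness arguments (yours via $|E_\alpha(\lambda t^\alpha)|\to\infty$, the paper's via the homogeneous equation) are also equivalent.
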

\begin{proof}
If $f\in C_\infty(\R_{\geq 0};\R)$, by virtue of Lemma \ref{lemma4}(i), it is obvious that $\varphi(t;0,\overline{x}_0)$ is a bounded solution of \eqref{ScalarEq}. 

Now let us consider the case $f\in C_\infty^0(\R_{\geq 0};\R)$. Let $\varepsilon>0$ be an arbitrary positive real number. Then there exists a positive constant $T>0$ such that 
\begin{equation}\label{DecayCond}
|f(t)|\le \varepsilon \,\,\textup{for all}\,\, t\geq T.
\end{equation}
For any $t\geq T+ 1$ we put
\begin{align*}
I_1(t)&=E_\alpha(\lambda t^\alpha)\Big(-\lambda^{\frac{1}{\alpha}-1}\int_t^\infty \exp{(-\lambda^{\frac{1}{\alpha}}\tau)}f(\tau)\;d\tau\Big),\\
I_2(t)&=\int_0^T\left[(t-\tau)^{\alpha-1}E_{\alpha,\alpha}(\lambda(t-\tau)^\alpha)-\lambda^{\frac{1}{\alpha}-1}\exp{(-\lambda^{\frac{1}{\alpha}}\tau)}E_\alpha(\lambda t^\alpha)\right]f(\tau)\;d\tau,\\
I_3(t)&=\int_T^t\left[(t-\tau)^{\alpha-1}E_{\alpha,\alpha}(\lambda(t-\tau)^\alpha)-\lambda^{\frac{1}{\alpha}-1}\exp{(-\lambda^{\frac{1}{\alpha}}\tau)}E_\alpha(\lambda t^\alpha)\right]f(\tau)\;d\tau.
\end{align*}
By virtue of \eqref{DecayCond} and Lemma \ref{lemma4}(i), we have
\begin{align}\label{ScalarEq1-}
|I_1(t)|\le &\;  \varepsilon K(\alpha,\lambda).
\end{align}
Denote by $\varphi$ the argument of the complex number $\lambda$. Since $\lambda\in \Lambda^u_\alpha$ we have $0 \leq |\varphi| < \frac{\alpha\pi}{2}$, hence $\cos\frac{\varphi}{\alpha} > 0$. Due to Lemma~\ref{lemma3}(i), we have
\begin{align}\label{ScalarEq2-}
\notag |I_2(t)|&\le \sup_{t\in \R_{\geq 0}}|f(t)|\left[\int_{t-T}^t\frac{m(\alpha,\lambda)}{\tau^{\alpha+1}}\;d\tau+\frac{|\lambda^{\frac{1}{\alpha}-1}|\,m(\alpha,\lambda)}{t^\alpha r^{\frac{1}{\alpha}}\cos\frac{\varphi}{\alpha}}\right]\\
&\le \sup_{t\in \R_{\geq 0}}|f(t)|\left[\frac{m(\alpha,\lambda)}{\alpha\,(t-T)^\alpha}+\frac{m(\alpha,\lambda)}{r t^\alpha \,\cos\frac{\varphi}{\alpha}}\right].
\end{align}
Furthermore, by \eqref{DecayCond} and Lemma \ref{lemma4}(i), we have
\begin{equation}\label{ScalarEq3-}
|I_3(t)|\le \varepsilon K(\alpha,\lambda).
\end{equation}
From \eqref{ScalarEq1-}, \eqref{ScalarEq2-}, \eqref{ScalarEq3-} and the fact that $\varepsilon$ can be made arbitrarily small, it implies that
\[
\lim_{t\to \infty}\varphi(t;0,\overline{x}_0)=0.
\]
To complete the proof, it remains to show that Equation \eqref{ScalarEq} has exactly one bounded solution determined in the formulation of the Proposition. Indeed, assume that $\hat\varphi$ is another bounded solution of \eqref{ScalarEq}. Then the difference between two solution $\varphi-\hat\varphi$ is bounded. Furthermore, due to linearity of \eqref{ScalarEq} this difference is a solution of the following homogeneous equation
\[
^{C\!}D^\alpha_{0+}x(t)=\lambda x(t).
\] 
However, since $\lambda\in\Lambda_\alpha^u$ the only bounded solution of this equation is the trivial solution. Therefore, $\varphi-\hat\varphi =0$. The proof is complete.
\end{proof}
\begin{remark}
If $\lambda\in\R$ then all the discussions in Lemmas \ref{Stable} and \ref{Unstable} above can be carried out exclusively in the field of real numbers.
\end{remark}

\subsection{The high dimensional case}
For a matrix $A\in \R^{d\times d}$, let $\{\hat\lambda_1,\ldots,\hat\lambda_m\}$ be the collection of  all the distinct complex eigenvalues of $A$. By definition the spectrum of $A$ is $\sigma(A) := \{\hat\lambda_1,\ldots,\hat\lambda_m\}$. 
Consider the high dimensional inhomogeneous equation
\begin{equation}\label{GeneralEq}
^{C\!}D^\alpha_{0+}x(t)=Ax(t)+f(t),
\end{equation}
where $A\in \R^{d\times d}$, $x : \R_{\geq 0} \rightarrow \R^d$ and $f: \R_{\geq 0} \rightarrow \R^d$ is a continuous vector-valued function whose components belong to the space $C_\infty(\R_{\geq 0};\R)$. 

Let $T\in\C^{d\times d}$ be a nonsingular matrix  transforming $A$ into its Jordan normal form, i.e., 
\[
T^{-1}A T=\hbox{diag}(A_1,\dots,A_n),
\]
where for $i=1,\dots,n$ the block $A_i$ is of the following form
\[
A_i=\lambda_i\, \id_{d_i\times d_i}+\eta_i\, N_{d_i\times d_i},
\]
where $\eta_i\in\{0,1\}$, $\lambda_i \in \sigma(A)$, and the nilpotent matrix $N_{d_i\times d_i}$ is given by
\[
N_{d_i\times d_i}:=
\left(
      \begin{array}{*7{c}}
      0  &     1         &    0      & \cdots        &  0        \\
        0        & 0    &    1     &   \cdots      &              0\\
        \vdots &\vdots        &  \ddots         &          \ddots &\vdots\\
        0 &    0           &\cdots           &  0 &          1 \\

        0& 0  &\cdots                                          &0         & 0 \\
      \end{array}
    \right)_{d_i \times d_i}.
\]
Let us notice that by this transformation we go from the field of real numbers  out to the field of complex numbers, and we may remain in the field of real numbers only if all eigenvalues of $A$ are real. For a general real-valued matrix $A$ we may simply embed $\R$ into $\C$, consider $A$ as a complex-valued matrix and thus get the above Jordan form for $A$. Alternatively, we may use a more cumbersome real-valued Jordan form  (see 
Lancaster and Tismenetsky~\cite[Chapter 6, p. 243]{Lancaster}; for discussion on similar issue for FDE see also Diethelm~\cite[pp. 152--153]{Kai}). We note that the embedding of $\R$ into $\C$ preserves the norm of vectors, hence the embedding and returning back from $\C$ to $\R$ do not change the boundedness and decaying properties of the functions. Therefore, if by using embedding method we can show the boundedness or decaying property of solutions of real valued equation \eqref{GeneralEq} then the boundedness and decaying property of solutions are shown and valid for the {\em real-valued solutions} of \eqref{GeneralEq}.
For simplicity we use the embedding method and omit the detailed discussion on how to return back to the field of real numbers. 
Note also that such kind of technique is well known in the theory of ordinary differential equations. 

By the transformation $T$ we reduce \eqref{GeneralEq} to the Jordan case. Next we investigate the case of one Jordan block. Namely, for an arbitrary complex number $\lambda\in\C\setminus\{0\}$, we introduce a notation
\[
A_\lambda=\left(
      \begin{array}{*7{c}}
       \lambda   &     1         &    0      & \cdots        &  0        \\
        0        & \lambda    &    1      &   \cdots      &              0\\
        \vdots &\vdots        &  \ddots         &          \ddots &\vdots\\
        0 &    0           &\cdots           &  \lambda&          1 \\
                                       
        0& 0  &\cdots                                          &0         &\lambda \\
      \end{array}
    \right)_{d_\lambda\times d_\lambda}.
\]
Let $g_{\lambda}=(g_1^\lambda,\cdots,g_{d_\lambda}^\lambda)^{\rT}:\R_{\ge 0}\to \R^{d_\lambda}$ be a continuous vector-valued function and whose components belong to the space $C_\infty(\R_{\geq 0};\R)$. Let us consider the equation
\begin{equation}\label{Eq5}
^{C\!}D^{\alpha}_{0+}x(t)=A_\lambda x(t)+g^\lambda(t).
\end{equation}
This equation can be rewritten in the following form
\begin{eqnarray}
^{C\!}D^{\alpha}_0 x_1(t)&=& \lambda x_1(t)+x_2(t)+g^{\lambda}_1(t)\\
^{C\!}D^{\alpha}_0 x_2(t)&=& \lambda x_2(t)+x_3(t)+g^{\lambda}_2(t)\\
\notag & \ldots\\
\label{tam3} ^{C\!}D^{\alpha}_0 x_{d_\lambda-1}(t)&=&\lambda x_{d_\lambda-1}(t)+x_{d_\lambda}(t)+g^{\lambda}_{d_\lambda-1}(t)\\
\label{tam1} ^{C\!}D^{\alpha}_0 x_{d_\lambda}(t)&=&\lambda x_{d_\lambda}(t)+g^{\lambda}_{d_\lambda}(t).
\end{eqnarray}
\begin{proposition}\label{propo1}
Let $\lambda\in \Lambda_\alpha^s$ and assume that $g^\lambda \in C_\infty(\R_{\geq 0};\R^{d_\lambda})$. Then all solutions of \eqref{Eq5} are bounded. 

If, additionally, $g_\lambda \in C_\infty^0(\R_{\geq 0};\R^{d_\lambda})$ then all the solutions of \eqref{Eq5} tend to 0 as $t$ tends to $\infty$.
\end{proposition}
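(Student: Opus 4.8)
The plan is to exploit the triangular (upper bidiagonal) structure of the Jordan block $A_\lambda$ and to reduce the system \eqref{Eq5} to a finite cascade of scalar equations of the form \eqref{ScalarEq}, each of which is handled by Proposition~\ref{Stable}. Concretely, I would argue by downward induction on the component index $k\in\{d_\lambda,d_\lambda-1,\dots,1\}$, showing that every component $x_k(\cdot)$ of an arbitrary solution $x(\cdot)$ of \eqref{Eq5} is bounded on $\R_{\ge 0}$, and that $x_k(t)\to 0$ as $t\to\infty$ when in addition $g^\lambda\in C_\infty^0(\R_{\ge 0};\R^{d_\lambda})$.

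For the base case $k=d_\lambda$, the last equation \eqref{tam1} is the scalar equation $^{C\!}D^\alpha_{0+}x_{d_\lambda}(t)=\lambda x_{d_\lambda}(t)+g^\lambda_{d_\lambda}(t)$ with $g^\lambda_{d_\lambda}\in C_\infty(\R_{\ge 0};\R)$ and $\lambda\in\Lambda_\alpha^s$, so Proposition~\ref{Stable} gives at once that $x_{d_\lambda}$ is bounded, and that $x_{d_\lambda}(t)\to 0$ whenever $g^\lambda_{d_\lambda}(t)\to 0$. For the inductive step, assume $x_{k+1},\dots,x_{d_\lambda}$ have already been shown to be bounded. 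The $k$-th equation (cf. \eqref{tam3}) can be written as
\[
^{C\!}D^\alpha_{0+}x_k(t)=\lambda x_k(t)+h_k(t),\qquad h_k(t):=x_{k+1}(t)+g^\lambda_k(t).
\]
Since $x_{k+1}(\cdot)$ is continuous — it is itself a solution of a scalar fractional equation, hence given by the variation-of-constants formula of Theorem~\ref{Var_Const_Form} — and bounded, and $g^\lambda_k\in C_\infty(\R_{\ge 0};\R)$, the new forcing term $h_k$ lies in $C_\infty(\R_{\ge 0};\C)$; applying Proposition~\ref{Stable} to this scalar equation yields boundedness of $x_k$. If moreover $g^\lambda\in C_\infty^0$, then by the induction hypothesis $x_{k+1}(t)\to 0$, so $h_k(t)\to 0$, and Proposition~\ref{Stable} gives $x_k(t)\to 0$. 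After $d_\lambda$ steps all components are bounded (respectively decaying), and therefore so is the vector-valued solution $x(\cdot)=(x_1,\dots,x_{d_\lambda})^{\rT}$ in the Euclidean norm, which is the claim.

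One technical point requires attention: in the inductive step the forcing term $h_k$ is in general complex-valued, whereas Proposition~\ref{Stable} is stated for $f\in C_\infty(\R_{\ge 0};\R)$. This is harmless. Writing $h_k=\operatorname{Re}h_k+i\operatorname{Im}h_k$ with both parts in $C_\infty(\R_{\ge 0};\R)$ and using the linearity of \eqref{ScalarEq}, the solution splits correspondingly and Proposition~\ref{Stable} applies to each summand; alternatively one checks directly that the proof of Proposition~\ref{Stable} — the variation-of-constants representation together with the boundedness of $E_\alpha(\lambda t^\alpha)$ and the estimate of Lemma~\ref{lemma4}(ii) — goes through verbatim for complex-valued $f$. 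I do not expect a genuine obstacle in this proof; the only care needed is in bookkeeping, namely distinguishing which scalar results require real- versus complex-valued data and noting that solutions of the scalar equations are continuous so that they may legitimately be reused as forcing terms one level higher in the cascade.
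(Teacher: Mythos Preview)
Your proposal is correct and follows essentially the same approach as the paper's own proof: a downward cascade through the triangular system, applying Proposition~\ref{Stable} at each step, first to the last component and then successively substituting each bounded component as part of the forcing term for the next equation up. Your treatment is in fact slightly more careful than the paper's, which does not explicitly address the continuity of the substituted component or the complex-valued nature of the forcing term $h_k$.
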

\begin{proof}
Assume that $g^\lambda \in C_\infty(\R_{\geq 0};\R^{d_\lambda})$.
Let $x^0=(x^0_1,\cdots,x^0_{d_\lambda})^{\rT}\in\R^{d_\lambda}$ be an arbitrary vector and  $\varphi(t;0,x^0)=(\varphi_1(t),\cdots,\varphi_{d_\lambda}(t))^{\rm T}$ denote the solution of \eqref{Eq5} satisfying the initial condition $\varphi(t;0, x^0)=x^0$. From \eqref{tam1}, we have
\begin{equation}\label{tam2}
\varphi_{d_\lambda}(t)=E_\alpha(\lambda t^\alpha)x^0_{d_\lambda}+\int_0^t (t-s)^{\alpha-1}E_{\alpha,\alpha}(\lambda (t-s)^\alpha)g^{\lambda}_{d_\lambda}(s)\;ds.
\end{equation}
It follows from Proposition~\ref{Stable} that $\varphi_{d_\lambda}$ is bounded in $\R_{\ge 0}$.
Substitute $\varphi_{d_\lambda}$ into \eqref{tam3} and applying Proposition~\ref{Stable} again
we get that $\varphi_{d_\lambda-1}$ is also bounded. Continue this process we will get that 
$\varphi_{d_\lambda-2},\cdots, \varphi_1$ are all bounded.

The case $g^\lambda \in C_\infty^0(\R_{\geq 0};\R^{d_\lambda})$ can be easily treated similarly.
\end{proof}

\begin{proposition}\label{propo2}
Let $\lambda\in \Lambda_\alpha^u$ and assume that $g^\lambda \in C_\infty(\R_{\geq 0};\R^{d_\lambda})$. Then, the equation \eqref{Eq5} has a unique bounded solution. 

If, additionally,   $g^\lambda \in C_\infty^0(\R_{\geq 0};\R^{d_\lambda})$ then this bounded solution 
tends to 0 as $t$ tends to $\infty$. 
\end{proposition}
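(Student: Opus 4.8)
The plan is to follow the back-substitution scheme used in the proof of Proposition~\ref{propo1}, with Proposition~\ref{Unstable} playing the role that Proposition~\ref{Stable} played there, and to supplement it with a short argument establishing uniqueness of the bounded solution for the coupled system \eqref{Eq5}. For existence, I would start from the last scalar equation \eqref{tam1}: since $\lambda\in\Lambda_\alpha^u$ and $g^\lambda_{d_\lambda}\in C_\infty(\R_{\ge 0};\R)$, Proposition~\ref{Unstable} yields a bounded solution $\varphi_{d_\lambda}$. Feeding $\varphi_{d_\lambda}$ into equation \eqref{tam3} replaces its inhomogeneity by $\varphi_{d_\lambda}+g^\lambda_{d_\lambda-1}$, which is again a bounded continuous function; so Proposition~\ref{Unstable} produces a bounded $\varphi_{d_\lambda-1}$. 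Iterating upward through the remaining equations one obtains bounded continuous $\varphi_{d_\lambda-2},\dots,\varphi_1$, and the vector $\varphi=(\varphi_1,\dots,\varphi_{d_\lambda})^{\rT}$ is, by Theorem~\ref{Var_Const_Form}, the unique solution of \eqref{Eq5} with initial value $\varphi(0)$; it is bounded by construction. (At the intermediate steps the inhomogeneity is in general complex-valued; this is harmless, since the variation-of-constants formula and the Mittag--Leffler estimates of Lemmas~\ref{lemma3} and \ref{lemma4} on which Proposition~\ref{Unstable} rests hold verbatim for complex forcing.)

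For uniqueness, let $\psi=(\psi_1,\dots,\psi_{d_\lambda})^{\rT}$ be any bounded solution of \eqref{Eq5}. Its last component $\psi_{d_\lambda}$ is a bounded solution of \eqref{tam1}, hence equals $\varphi_{d_\lambda}$ by the uniqueness part of Proposition~\ref{Unstable}; then $\psi_{d_\lambda-1}$ is a bounded solution of \eqref{tam3} with the same (now fixed) inhomogeneity, so $\psi_{d_\lambda-1}=\varphi_{d_\lambda-1}$; proceeding upward gives $\psi=\varphi$. Thus \eqref{Eq5} has exactly one bounded solution.

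Finally, if $g^\lambda\in C_\infty^0(\R_{\ge 0};\R^{d_\lambda})$, I would rerun the same cascade using the decay assertion in Proposition~\ref{Unstable}: $\varphi_{d_\lambda}\to 0$, hence $\varphi_{d_\lambda}+g^\lambda_{d_\lambda-1}\in C_\infty^0$ and $\varphi_{d_\lambda-1}\to 0$, and so on, so $\varphi_i\to 0$ for all $i$. I do not anticipate a genuine obstacle: the whole argument is a finite induction on the block size $d_\lambda$ built on Proposition~\ref{Unstable}, exactly parallel to Proposition~\ref{propo1}. The only points requiring a little care are checking that the inhomogeneity remains in the correct class ($C_\infty$, resp.\ $C_\infty^0$) after each substitution --- immediate from the boundedness, resp.\ decay, of the component just determined --- and phrasing the uniqueness step at the level of the coupled system rather than component by component. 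If an explicit formula for the bounded solution is wanted, one can unwind the recursion into a finite sum of iterated integrals of the two kernels appearing in Proposition~\ref{Unstable}.
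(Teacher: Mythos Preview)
Your proposal is correct and follows exactly the approach the paper intends: the paper's own proof is a one-line instruction to repeat the back-substitution argument of Proposition~\ref{propo1} with Proposition~\ref{Unstable} in place of Proposition~\ref{Stable}, which is precisely what you carry out (with the componentwise uniqueness and the complex-forcing remark spelled out more explicitly than in the paper).
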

\begin{proof}
Use arguments similar to that of the proof of Proposition~\ref{propo1} and with the application of  Proposition~\ref{Unstable}.
\end{proof}
\begin{remark}
If $\lambda\in\R$ then all the discussions in Lemmas \ref{propo1} and \ref{propo2} above can be carried out exclusively in the field of real numbers.
\end{remark}

Now we come to the investigation of the general case of \eqref{GeneralEq}. 
Denote by $\mathfrak{B}(A,f)$ the set of all bounded solutions of \eqref{GeneralEq}. We give a complete description the set  $\mathfrak{B}(A,f)$ in case the spectrum of $A$ satisfies the hyperbolicity condition \eqref{SpecCond}. 
Recall that by the transformation $T$ we reduce  $A$ to the Jordan form. Let us make the change of variable $x(\cdot) = Ty(\cdot)$ then \eqref{GeneralEq} is transformed into the equation
\begin{equation}\label{eqn.jordan}
^{C\!}D^{\alpha}_{0+}y(t) = By(t)+g(t),
\end{equation}
where $B$ is the Jordan normal form of $A$,
\begin{equation}\label{eqn.jordan1}
B= T^{-1}AT = \hbox{diag}(A_1,\dots,A_n), \quad \hbox{and} \quad g(t) = T^{-1} f(t).
\end{equation}
Note that the set $\mathfrak{B}(A,f)$ of all bounded solutions of \eqref{GeneralEq} can be found from the set $\mathfrak{B}(B,g)$ of all bounded solutions of \eqref{eqn.jordan} by the formula
\begin{equation}\label{eqn.jordan2}
\mathfrak{B}(A,f) = T\mathfrak{B}(B,g).
\end{equation}
Therefore, for description of the set of all bounded solutions of the general equation \eqref{GeneralEq} it suffices to do it in the Jordan case, i.e., for the equation \eqref{eqn.jordan}. Moreover, since $\sigma(B)=\sigma(A)$  if $A$ satisfies the hyperbolicity condition \eqref{SpecCond}  then $B$ satisfies the hyperbolicity condition $\sigma(B) \subset \Lambda_{\alpha}^u \cup \Lambda_{\alpha}^s$.

Assume that the spectrum of the Jordan matrix $B$ satisfies the hyperbolicity condition $\sigma(B) \subset \Lambda_{\alpha}^u \cup \Lambda_{\alpha}^s$, then without loss of generality we may rewrite \eqref{eqn.jordan} into the form
\begin{equation}\label{eq.tam}
^{C\!}D^{\alpha}_{0+}y(t) = \hbox{diag}(B^s,B^u)y(t)+(g^s(t),g^u(t))^{\rm T}
\end{equation}
where $B^{s/u}$ is the part of $B$ corresponding to the collection of all blocks with the eigenvalues belonging to $\Lambda_\alpha^{s/u}$. Now assume that $g^\lambda \in C_\infty(\R_{\geq 0};\R^{d_\lambda})$. Applying Proposition~\ref{propo2} to each Jordan block from $B^u$ we find a unique bounded solution $\bar\varphi(\cdot;0,\bar{y}^u)$ of the  equation (the unstable part of the equation \eqref{eq.tam})
$$
^{C\!}D^{\alpha}_{0+}y^u(t) = B^u y^u(t)+g^u(t);
$$
and applying Proposition~\ref{propo1} to each Jordan block from $B^s$ we find that for any initial value $y_0^s$ the solution $\varphi(\cdot;0,y_0^s)$ of the  equation (the stable part of the equation \eqref{eq.tam})
$$
^{C\!}D^{\alpha}_{0+}y^s(t) = B^s y^s(t)+g^s(t),\qquad y^s(0) = y_0^s,
$$
is bounded. The case the components of $g$ belong to $C^0_\infty(\R_{\geq 0};\R^{d_\lambda})$ can be treated similarly. Thus, we arrive at a theorem about the structure of the bounded (decay) solutions of fractional differential equations as follows.
\begin{theorem}[Structure of the bounded (decay) solutions of FDEs]\label{Main Result1}
Assume that the spectrum of $B$ satisfies the hyperbolicity condition $\sigma(B) \subset \Lambda_{\alpha}^u \cup \Lambda_{\alpha}^s$ and $g: [0,\infty)\to \R^d$ is a continuous vector-valued function whose components belong to the space $C_\infty(\R_{\geq 0};\R)$. Then the set of all bounded solutions of \eqref{eq.tam} is 
\[
\mathfrak{B}(B,g)=\left\{(\varphi(0,y_0^s;t),\bar\varphi(0,\bar{y}^u;t))^{\rm T} \right\},
\]
where the functions $\varphi(\cdot;0,y_0^s)$ and $\bar\varphi(\cdot;0,\bar{y}^u)$ are described in the paragraph preceding the formulation of the theorem. 

If, additionally, $g\in C_\infty^0(\R_{\geq 0};\R^d)$, then all the bounded solutions of \eqref{eq.tam} tend to 0 as $t$ tends to $\infty$.
\end{theorem}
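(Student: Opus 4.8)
The plan is to reduce the theorem to Propositions~\ref{propo1} and \ref{propo2} by exploiting the block-diagonal structure of $\hbox{diag}(B^s,B^u)$. First I would observe that, because the coefficient matrix in \eqref{eq.tam} is block diagonal and the Caputo derivative acts component-wise, a function $y(\cdot)=(y^s(\cdot),y^u(\cdot))^{\rm T}$ solves \eqref{eq.tam} if and only if $y^s(\cdot)$ solves $^{C\!}D^{\alpha}_{0+}y^s = B^s y^s + g^s$ and $y^u(\cdot)$ solves $^{C\!}D^{\alpha}_{0+}y^u = B^u y^u + g^u$. Since on a finite-dimensional space the norm of a vector is comparable to the maximum of the norms of its two blocks, $y(\cdot)$ is bounded on $\R_{\geq 0}$ (respectively tends to $0$) precisely when both $y^s(\cdot)$ and $y^u(\cdot)$ are bounded (respectively tend to $0$). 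Hence $\mathfrak{B}(B,g)$ is the Cartesian product of the set of bounded solutions of the stable subsystem with the set of bounded solutions of the unstable subsystem.

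Next I would split each of $B^s$ and $B^u$ into its individual Jordan blocks $A_\lambda$. Each block equation is precisely of the form \eqref{Eq5} with a forcing term whose components lie in $C_\infty(\R_{\ge 0};\R)$, and the hyperbolicity hypothesis $\sigma(B)\subset\Lambda_\alpha^u\cup\Lambda_\alpha^s$ guarantees that $0\notin\sigma(B)$, so every eigenvalue occurring lies in $\Lambda_\alpha^u$ or in $\Lambda_\alpha^s$ and Propositions~\ref{propo1}, \ref{propo2} apply directly. For a block with $\lambda\in\Lambda_\alpha^s$, Proposition~\ref{propo1} says that \emph{every} solution is bounded, so the bounded solutions of $^{C\!}D^{\alpha}_{0+}y^s = B^s y^s + g^s$ are exactly the solutions $\varphi(\cdot;0,y_0^s)$ as $y_0^s$ ranges over all initial data. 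For a block with $\lambda\in\Lambda_\alpha^u$, Proposition~\ref{propo2} provides a unique bounded solution; concatenating these over all unstable blocks yields one bounded solution $\bar\varphi(\cdot;0,\bar y^u)$ of $^{C\!}D^{\alpha}_{0+}y^u = B^u y^u + g^u$, and it is the only one, because any bounded solution of the unstable subsystem restricts on each block to a bounded solution of that block, hence to the unique bounded solution supplied by Proposition~\ref{propo2}. Combining the two descriptions gives the claimed formula for $\mathfrak{B}(B,g)$.

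For the decay statement I would note that if $g\in C_\infty^0(\R_{\geq 0};\R^d)$ then $g^s$ and $g^u$ also have components in $C_\infty^0(\R_{\geq 0};\R)$, so the ``additionally'' clauses of Propositions~\ref{propo1} and \ref{propo2} apply verbatim to each Jordan block: every solution of the stable subsystem tends to $0$ and the unique bounded solution $\bar\varphi(\cdot;0,\bar y^u)$ of the unstable subsystem tends to $0$. By the comparability of norms noted above, every element of $\mathfrak{B}(B,g)$ then tends to $0$ as $t\to\infty$.

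I do not expect a serious obstacle: the analytic content is entirely carried by Propositions~\ref{propo1} and \ref{propo2}, and what remains is the bookkeeping of assembling block solutions. The only point requiring a little care is the uniqueness half of the unstable part --- ensuring that the passage from ``unique bounded solution on each block'' to ``unique bounded solution of the block-diagonal unstable subsystem'' is legitimate, which it is precisely because boundedness of a vector-valued function is equivalent to boundedness of each of its coordinate blocks.
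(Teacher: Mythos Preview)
Your proposal is correct and follows essentially the same approach as the paper: the argument there (given in the paragraph immediately preceding the theorem statement rather than in a separate proof block) also decouples \eqref{eq.tam} into its stable and unstable subsystems via the block-diagonal structure and then applies Propositions~\ref{propo1} and \ref{propo2} Jordan block by Jordan block. Your write-up is in fact slightly more careful than the paper's, since you make explicit why boundedness of the full solution is equivalent to boundedness of each block component and why uniqueness on each unstable block entails uniqueness of $\bar\varphi(\cdot;0,\bar{y}^u)$ for the whole unstable subsystem.
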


Before going to formulation and proof of the main theorem of the paper about a necessary and sufficient condition for the inhomogeneous system \eqref{mainEq} to have at least one bounded solution, we give here two examples showing the existence of unbounded solutions of FDEs in nonhyperbolic case.

\begin{example}[Trivial linear part]\label{Triviallinearpart}
Consider the scalar fractional differential equation with trivial linear part
\[
^{C\!}D^{\alpha}_{0+}x(t)=\Gamma(1+\alpha).
\]
This equation is easily solved. Its general solution is $x_0 + t^\alpha$, and clearly no solution is bounded.
\end{example}

\begin{example}[Non hyperbolic linear part]\label{Nonhyperboliclinearpart}
Consider the scalar fractional differential equation
\begin{equation}\label{eq.ex}
^{C\!}D^{\alpha}_{0+}x(t)=\lambda x(t)+\exp{(ir^{\frac{1}{\alpha}}t)},
\end{equation}
where $\lambda=r(\cos\frac{\pi \alpha}{2} + i\sin\frac{\pi \alpha}{2})$. Let $x_0\in\C$ be arbitrary. By Theorem \ref{Var_Const_Form}, 
the solution $\varphi(\cdot;0,x_0)$ of \eqref{eq.ex} starting from $x_0$  satisfies
\[
\varphi(t;0,x_0)=E_\alpha(\lambda t^\alpha)\,x_0+\int_0^t (t-\tau)^{\alpha-1}E_{\alpha,\alpha}(\lambda (t-\tau)^\alpha)\exp{(ir^{\frac{1}{\alpha}}\tau)}\;d\tau.
\]
We claim that this solution is unbounded. Indeed, the quantity $x_0E_\alpha(\lambda t^\alpha)$ is bounded due to Podlubny~\cite[Theorem 1.1, p.~30]{Podlubny}, while the quantity 
$$
\int_{t-1}^t (t-\tau)^{\alpha-1}E_{\alpha,\alpha}(\lambda (t-\tau)^\alpha)\;d\tau
$$ 
is bounded by the following estimate
$$
\Big|\int_{t-1}^t (t-\tau)^{\alpha-1}E_{\alpha,\alpha}(\lambda (t-\tau)^\alpha)\;d\tau\Big| \le \int_0^1  s^{\alpha-1}|E_{\alpha,\alpha}(\lambda s^\alpha)|\;ds.
$$
Furthermore, by using Podlubny~\cite[Theorem 1.1, p.~30]{Podlubny}, it is not difficult to show that the quantity 
\[
\int_{0}^{t-1} (t-\tau)^{\alpha-1}E_{\alpha,\alpha}(\lambda (t-\tau)^\alpha)\exp{(ir^{\frac{1}{\alpha}}\tau)}\;d\tau
\]
is unbounded. Indeed, for $\theta \in (\frac{\alpha \pi}{2},\alpha \pi)$ is arbitrary but fixed and $\varepsilon \in (0,\frac{|\lambda|}{2})$ satisfies
\begin{equation}\label{unbouded_est}
|\lambda| t^\alpha-\varepsilon\geq |\lambda| t^\alpha \sin (\theta-\frac{\alpha \pi}{2})
\end{equation}
for all $t\geq 1$, we denote by $\gamma(\varepsilon,\theta)$ the contour consisting of the following three parts
\begin{itemize}
\item [(i)] $\text{arg}(z)=-\theta$, $|z|\ge \varepsilon$;
\item [(ii)] $-\theta\le \text{arg}(z)\le \theta$, $|z|=\varepsilon$;
\item [(iii)] $\text{arg}(z)=\theta$, $|z|\ge \varepsilon$.
\end{itemize}
The contour $\gamma(\varepsilon,\theta)$ divides the complex plane $(z)$ into two domains, which we denote by $G^{-}(\varepsilon,\theta)$ and $G^{+}(\varepsilon,\theta)$. These domains lie correspondingly on the left and on the right side of the contour $\gamma(\varepsilon,\theta)$. According to \cite[Theorem 1.1, p.~30]{Podlubny}, we have
\begin{align*}
& \int_{0}^{t-1} (t-\tau)^{\alpha-1}E_{\alpha,\alpha}(\lambda (t-\tau)^\alpha)\exp{(ir^{\frac{1}{\alpha}}\tau)}\;d\tau\\
&=\int_0^{t-1} (t-\tau)^{\alpha-1}\frac{1}{\alpha}\lambda^{\frac{1-\alpha}{\alpha}}(t-\tau)^{1-\alpha}\exp(\lambda^{\frac{1}{\alpha}}(t-\tau))\exp(i r^{\frac{1}{\alpha}}\tau)\;d\tau\\
&\hspace*{2.0cm}+\int_0^{t-1} (t-\tau)^{\alpha-1}\frac{1}{2\alpha \pi i}\int_{\gamma(\varepsilon,\theta)}\frac{\exp(\xi^{\frac{1}{\alpha}})\xi^{\frac{1-\alpha}{\alpha}}}{\xi-\lambda (t-\tau)^\alpha}\;d\xi \exp(i r^{\frac{1}{\alpha}}\tau)\;d\tau\\
&=I_4(t)+I_5(t).
\end{align*}
Clearly, we see that
\begin{align}
\notag I_4(t)&=\int_0^{t-1} (t-\tau)^{\alpha-1}\frac{1}{\alpha}\lambda^{\frac{1-\alpha}{\alpha}}(t-\tau)^{1-\alpha}\exp(\lambda^{\frac{1}{\alpha}}(t-\tau))\exp(i r^{\frac{1}{\alpha}}\tau)\;d\tau\\
&=\frac{\lambda^{\frac{1-\alpha}{\alpha}}}{\alpha}(t-1)\exp(\lambda^{\frac{1}{\alpha}}t).\label{unbouded_est1}
\end{align}
On the other hand, due to \eqref{unbouded_est}, we obtain
\begin{align}
|I_5(t)|&\leq \frac{\int_{\gamma(\varepsilon,\theta)}|\exp(\xi^{\frac{1}{\alpha}})\xi^{\frac{1-\alpha}{\alpha}}|\;d\xi}{2\alpha \pi \sin (\theta-\frac{\alpha \pi}{2})}\int_0^{t-1}\frac{(t-\tau)^{\alpha-1}}{|\lambda|(t-\tau)^\alpha}\;d\tau\\
&\leq \frac{\int_{\gamma(\varepsilon,\theta)}|\exp(\xi^{\frac{1}{\alpha}})\xi^{\frac{1-\alpha}{\alpha}}|\;d\xi}{2\alpha \pi |\lambda|\sin (\theta-\frac{\alpha \pi}{2})}\log t.\label{unbounded_est2}
\end{align}
From \eqref{unbouded_est1} and \eqref{unbounded_est2}, this implies that the quantity 
\[
\int_{0}^{t-1} (t-\tau)^{\alpha-1}E_{\alpha,\alpha}(\lambda (t-\tau)^\alpha)\exp{(ir^{\frac{1}{\alpha}}\tau)}\;d\tau
\]
is unbounded.
\end{example}

 \begin{theorem}[A Perron-type theorem for FDSs]\label{thm.main}
The inhomogeneous system \eqref{mainEq}
$$
^{C\!}D^\alpha_{0+}x(t)=Ax(t)+f(t)
$$
has at least one bounded solution for every $f\in C_\infty(\R_{\geq 0};\R^d)$ if only if the matrix $A$ 
satisfies the hyperbolicity condition \eqref{SpecCond}:
$$
\sigma(A) \subset \Lambda_{\alpha}^u \cup \Lambda_{\alpha}^s,
$$
where $\Lambda_{\alpha}^u, \Lambda_{\alpha}^s$ are defined by \eqref{SectorUnstable}--\eqref{SectorStable}.
\end{theorem}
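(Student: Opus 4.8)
The plan is to prove the two implications separately. For sufficiency — hyperbolicity implies existence of a bounded solution — I would simply invoke Theorem~\ref{Main Result1}. Transforming to Jordan form as in the discussion preceding that theorem, with $B=T^{-1}AT$ and $g=T^{-1}f$, equation \eqref{mainEq} becomes \eqref{eqn.jordan}; since $\sigma(B)=\sigma(A)$ the matrix $B$ is again hyperbolic, so \eqref{eqn.jordan} takes the form \eqref{eq.tam}, and the components of $g$ — fixed linear combinations of those of $f\in C_\infty(\R_{\ge 0};\R^d)$ — satisfy the hypotheses of Theorem~\ref{Main Result1} (the passage between $\R$ and $\C$ being harmless, cf.\ the paragraph on the embedding method). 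Hence $\mathfrak{B}(B,g)\neq\emptyset$, and by \eqref{eqn.jordan2} we get $\mathfrak{B}(A,f)=T\,\mathfrak{B}(B,g)\neq\emptyset$, which is the desired conclusion.

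For necessity I would argue by contraposition: suppose $A$ does not satisfy \eqref{SpecCond}. Since $\C\setminus\{0\}$ is partitioned into $\Lambda_\alpha^u$, $\Lambda_\alpha^s$ and the two critical rays $\{|\arg\lambda|=\frac{\alpha\pi}{2}\}$, there is $\lambda_0\in\sigma(A)$ with $\lambda_0=0$ or $|\arg\lambda_0|=\frac{\alpha\pi}{2}$; in the second case $\lambda_0\notin\R$ (as $0<\alpha<1$), and since $A$ is real $\bar\lambda_0\in\sigma(A)$ too, so after possibly replacing $\lambda_0$ by $\bar\lambda_0$ we may assume $\lambda_0=re^{i\alpha\pi/2}$ with $r:=|\lambda_0|>0$, whence $\lambda_0^{1/\alpha}=ir^{1/\alpha}$. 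Choose $w\in\C^d\setminus\{0\}$ with $w^{\rT}A=\lambda_0 w^{\rT}$ (possible because $\sigma(A^{\rT})=\sigma(A)$). For any solution $\varphi(\cdot;0,x_0)$ of \eqref{mainEq} the scalar function $z(t):=w^{\rT}\varphi(t;0,x_0)$ satisfies $\,{}^{C\!}D^\alpha_{0+}z(t)=\lambda_0 z(t)+w^{\rT}f(t)$ and $|z(t)|\le\|w\|\,\|\varphi(t;0,x_0)\|$, so it suffices to choose $f\in C_\infty(\R_{\ge 0};\R^d)$ making every solution of this scalar equation unbounded. If $\lambda_0=0$: pick a constant $a\in\R^d$ with $w^{\rT}a\neq0$ (possible since $w\neq0$) and take $f\equiv a$; then every solution $z$ equals $z(0)+\frac{w^{\rT}a}{\Gamma(1+\alpha)}\,t^\alpha$ and is unbounded, exactly as in Example~\ref{Triviallinearpart}. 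If $|\arg\lambda_0|=\frac{\alpha\pi}{2}$: since $\lambda_0\notin\R$, the vectors $\mathrm{Re}\,w$ and $\mathrm{Im}\,w$ are $\R$-linearly independent (otherwise $w$ would be a complex multiple of a real vector and $w^{\rT}A=\lambda_0 w^{\rT}$ would force $\lambda_0\in\R$), so the $\R$-linear map $x\mapsto w^{\rT}x$ from $\R^d$ onto $\C$ is surjective; pick $a,b\in\R^d$ with $w^{\rT}a=1$ and $w^{\rT}b=i$ and set $f(t):=\cos(r^{1/\alpha}t)\,a+\sin(r^{1/\alpha}t)\,b$, so that $w^{\rT}f(t)=e^{ir^{1/\alpha}t}=e^{\lambda_0^{1/\alpha}t}$ and $z$ solves $\,{}^{C\!}D^\alpha_{0+}z=\lambda_0 z+e^{\lambda_0^{1/\alpha}t}$, all of whose solutions are unbounded by the computation carried out in Example~\ref{Nonhyperboliclinearpart}. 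In either case $f$ is bounded and continuous, yet \eqref{mainEq} then has no bounded solution, contradicting the assumption; hence \eqref{SpecCond} must hold.

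The sufficiency half is immediate once Theorem~\ref{Main Result1} is available, so the substance of the argument is in necessity. The two delicate points there are: (i) the reduction to a scalar equation whose inhomogeneity $w^{\rT}f$ can be prescribed by a genuinely \emph{real} forcing $f$ — this hinges on the observation that for an eigenvalue on the critical rays the real and imaginary parts of the (left) eigenvector are $\R$-independent; and (ii) the unboundedness of the resonantly forced scalar equation, whose proof via the contour-integral representation of the Mittag-Leffler function (done in Example~\ref{Nonhyperboliclinearpart}) is the technically heaviest ingredient and the step I would expect to demand the most care.
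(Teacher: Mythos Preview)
Your proof is correct. The sufficiency half matches the paper's argument exactly. In the necessity half you take a genuinely different route from the paper, and yours is in fact the cleaner one.

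The paper argues necessity by passing to Jordan normal form: it writes (after a complex change of coordinates and reordering) $A=\hbox{diag}(A_\lambda,\lambda)$, so that the last scalar equation decouples as $\,{}^{C\!}D^\alpha_{0+}x_d=\lambda x_d+f_d$, and then takes $f_d(t)=\exp(ir^{1/\alpha}t)$ (or $f_d\equiv\Gamma(1+\alpha)$ when $\lambda=0$), invoking Examples~\ref{Triviallinearpart} and~\ref{Nonhyperboliclinearpart} exactly as you do. Your approach instead projects via a \emph{left} eigenvector $w$, obtaining the same scalar equation for $z=w^{\rT}\varphi$ without ever changing coordinates. What this buys you is a direct construction of a \emph{real} forcing $f$: your observation that $\mathrm{Re}\,w$ and $\mathrm{Im}\,w$ are $\R$-linearly independent (since $\lambda_0\notin\R$) lets you solve $w^{\rT}a=1$, $w^{\rT}b=i$ in $\R^d$ and take $f(t)=\cos(r^{1/\alpha}t)\,a+\sin(r^{1/\alpha}t)\,b$. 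The paper, by contrast, writes down a complex-valued $f$ and relies on its earlier blanket remark about the embedding $\R\hookrightarrow\C$ being harmless; this can be made rigorous (if the complex equation has no bounded solution then at least one of the two real equations obtained from $\mathrm{Re}\,f$ and $\mathrm{Im}\,f$ must also have none), but the paper does not spell this out. Your argument handles the real/complex issue explicitly and avoids the Jordan-form bookkeeping altogether; the trade-off is only the small extra linear-algebra lemma about $\mathrm{Re}\,w,\mathrm{Im}\,w$.
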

 
\begin{proof}
Suppose that $A$ satisfies the hyperbolicity condition \eqref{SpecCond}, then by Theorem \ref{Main Result1} and \eqref{eqn.jordan1}--\eqref{eqn.jordan2} for any bounded continuous $f(\cdot)$ the FDE
\eqref{mainEq} has at least one bounded solution.

Now, assume that $A$ does not satisfies the hyperbolicity condition \eqref{SpecCond}, then there exists $\lambda \in \sigma (A)$ such that $\lambda=0$ or $\arg(\lambda)=\pm\frac{\alpha \pi}{2}$.
Without loss of generality, (transform $A$ to the Jordan form and change the order of coordinates if necessary) we can write $A$ in the form
\[
A=\hbox{diag}(A_\lambda,\lambda ).
\]
Choosing 
\[
f(t)=\begin{cases}
(f_\lambda (t),\exp{(ir^{\frac{1}{\alpha}}t)})^{\rm T},\quad &\text{if} \quad \arg(\lambda)=\pm\frac{\alpha \pi}{2}\\
(f_\lambda (t),\Gamma(1+\alpha))^{\rm T}, \quad & \text{if} \quad \lambda=0,
\end{cases}
\]
where $r=|\lambda|$ and $f_\lambda:\R_{\geq 0}\rightarrow \R^{d-1}$ is a bounded continuous function. In the case $\arg(\lambda)=\frac{\alpha \pi}{2}$, from \eqref{GeneralEq}, we see that the $d$-component of the solution $\varphi(\cdot;0,x^0)$ is
\[
\varphi_d(t)=E_\alpha(\lambda t^\alpha)\,x_d^0+\int_0^t (t-\tau)^{\alpha-1}E_{\alpha,\alpha}(\lambda (t-\tau)^\alpha)\exp{(ir^{\frac{1}{\alpha}}\tau)}\;d\tau.
\]
Due to Example \ref{Nonhyperboliclinearpart}, $\varphi_d(\cdot)$ is unbounded in $\R_{\geq 0}$ for any $x_d^0\in \R$, and thus the solution $\varphi(\cdot;0,x^0)$ is unbounded for any $x^0\in \R^d$. 
 Similarly, in case
$\arg(\lambda)=-\frac{\alpha \pi}{2}$ any solution of \eqref{mainEq} is unbounded. 
Now, if $\lambda=0$, due to Example \ref{Triviallinearpart}, the $d$-component of the solution $\varphi(\cdot;0,x^0)$ of \eqref{GeneralEq} is
\[
x_d^0 + t^\alpha.
\]
This shows that $\varphi(\cdot;0,x^0)$ is unbounded for any initial condition $x^0$. 

Thus, we have shown that if  $A$ does not satisfies the hyperbolicity condition \eqref{SpecCond}, then any solution of \eqref{mainEq} is unbounded.
The proof is complete.
\end{proof}

\section*{Acknowledgement}
The work of authors is supported by the Vietnam National Foundation for Science and Technology Development (NAFOSTED) under Grant Number 101.03-2014.42.

\end{document}